\documentclass[12pt]{amsart}

\voffset=-1.4mm
\oddsidemargin=17pt \evensidemargin=17pt
\headheight=9pt     \topmargin=26pt
\textheight=576pt   \textwidth=440.8pt
\parskip=0pt plus 4pt

\usepackage{amsmath}
\usepackage{amssymb}
\usepackage{amsthm}
\usepackage{bm}
\usepackage{mathrsfs}
\usepackage{hyperref}
\usepackage{graphicx}
\usepackage{xcolor}
\usepackage{enumitem}
\setlist[enumerate]{parsep=0pt plus 4pt,topsep=0pt plus 4pt}
\usepackage{url}
\usepackage{mathtools}
\allowdisplaybreaks
\usepackage{epsfig}
\usepackage{xspace}

\newcommand\excise[1]{}

\newtheorem{thm}{Theorem}[section]
\newtheorem{lemma}[thm]{Lemma}
\newtheorem{prop}[thm]{Proposition}

\newtheorem{cor}[thm]{Corollary}

\newtheorem*{claim*}{Claim}
\theoremstyle{definition}
\newtheorem{defn}[thm]{Definition}
\newtheorem{remark}[thm]{Remark}

\numberwithin{equation}{section}




\newcounter{separated-sec}
\newcounter{collapse-sec}

\newcommand{\Ring}[1]{\ensuremath{\mathbb{#1}}}

\renewcommand\>{\rangle}
\newcommand\<{\langle}

\newcommand\KK{\mathcal{K}}
\newcommand\LL{\mathcal{L}}
\newcommand\MM{\mathcal{M}}

\newcommand\OO{\mathcal{O}}
\newcommand\RR{\Ring{R}}

\newcommand\XX{\mathcal{X}}

\newcommand\dd{\mathbf{d}}

\newcommand\pp{\mathfrak{p}}

\newcommand\cI{\mathcal{I}}

\newcommand\cS{\mathcal{S}}



\newcommand\ve{\varepsilon}
\newcommand\vS{\smash{\makebox[0pt][l]{$S$}%
                \raisebox{1.4ex}{\scalebox{1.1}[.5]{$^{\rightarrow\!\!}$}}}}%
\newcommand\vT{\smash{\makebox[0pt][l]{$T$}%
                \raisebox{1.4ex}{\scalebox{1.1}[.5]{$^{\rightarrow\!\!}$}}}}%

\newcommand\bmu{{\bar\mu}}



\newcommand\IZ{\cI(Z)}




\newcommand\SpM{S_p\MM}

\newcommand\TZX{\vT_Z\XX}

\newcommand\TpM{T_p\MM}


\newcommand\pOz{\pp_{\OO\to z}}









\renewcommand\implies{\Rightarrow}


\newcommand\interior[1]{{\kern0pt#1}^{\mathrm{o}}}


\DeclareMathOperator\CAT{CAT}

\DeclareMathOperator\hull{hull}


\DeclarePairedDelimiter\norm{\lVert}{\rVert}



\setlength\marginparwidth{3cm}
\setlength\marginparsep{1mm}
\definecolor{teal}{HTML}{029386}
%

\begin{document}

\title{Shadow geometry at singular points of $\CAT(\kappa)$ spaces}
\author{Jonathan C. Mattingly}
\address{\rm Departments of Mathematics and of Statistical Sciences,
Duke University, Durham, NC 27708}
\urladdr{\url{https://scholars.duke.edu/person/jonathan.mattingly}}
\author{Ezra Miller}
\address{\rm Departments of Mathematics and of Statistical Sciences,
Duke University, Durham, NC 27708}
\urladdr{\url{https://scholars.duke.edu/person/ezra.miller}}
\author[Do Tran]{Do Tran}%
\address{\rm Georg-August Universit\"at at G\"ottingen, Germany}

\makeatletter
  \@namedef{subjclassname@2020}{\textup{2020} Mathematics Subject Classification}
\makeatother
\subjclass[2020]{Primary: 53C23, 49J52, 58K30, 53C80;
Secondary: 60F05, 60D05, 62R20, 62R07, 92B10}

%

\date{11 November 2023}

\begin{abstract}
In any $\CAT(\kappa)$ space~$\MM$, the \emph{shadow} of a tangent
vector $Z$ at a point~$p$ is the set vectors that form an angle
of~$\pi$ or more with~$Z$.  Taking logarithm maps at points
approaching~$p$ along a fixed geodesic ray from~$p$ with tangent~$Z$
collapses the shadow to a single ray while leaving isometrically
intact every convex cone that avoids the shadow of~$Z$.
\vspace{-2ex}
\end{abstract}

\maketitle
\tableofcontents
\vspace{-2.1ex}

\section*{Introduction}\label{s:intro}

\noindent
Singular spaces have in recent years attracted increasing interest
from
data science, where samples are taken from such a space~$\MM$ and the
goal is to carry out statistical analysis to the extent possible.
Examples include spaces of phylogenetic trees \cite{holmes2003, BHV01,
feragen-lo-et-al2013, lin-sturmfels-tang-yoshida2017,
lueg-garba-nye-huckemann2021}, shapes \cite{le2001,
kendall-barden-carne-le99}, and positive semi-definite matrices
\cite{groisser-jung-schwartzman2017, buet-pennec2023}, as well as from
studies of computer vision \cite{hartley-trumpf-dai-li2013} and
medical images \cite{pennec-sommer-fletcher2020,
pritchard-stephens-donnelly2000}.  Asymptotics of such sampling relies
on local geometry of~$\MM$ near the Fr\'echet mean of a probability
distribution~$\mu$ on~$\MM$.

When an empirical mean~$\bmu_n$ moves in a direction away from the
population Fr\'echet mean~$\bmu$, adding mass in any direction at an
angle of~$\pi$ or more from the direction at~$\bmu$ pointing
to~$\bmu_n$ drags the empirical mean directly toward~$\bmu$---not
approximately, but exactly.  Hence these \emph{shadow} directions may
as well all be collapsed to a ray for the purpose of determining the
effect of adding mass there on variation of that empirical mean.  That
is why the geometry of shadows is fundamental to geometric central
limit theorems on singular spaces in every known case:
\begin{itemize}
\item%
open books \cite{hotz-et-al.2013}, which are half-spaces of fixed
dimension~$d$ all glued along their boundary~$\RR^{d-1}$;
\item%
isolated planar hyperbolic singularities \cite{kale-2015}, which are
surfaces that are metrically except for one point at which the
curvature is negative; and
\item%
tree spaces \cite{barden-le2018} (see also the precursors
\cite{barden-le-owen2013, barden-le-owen2018} and the definition of
the relevant notion of tree space \cite{BHV01}).
\end{itemize}
These spaces are all $\CAT(0)$ polyhedral complexes, built from
metrically flat pieces by gluing in a way that creates only negative
curvature.  In contrast, central limit theorems have been known on
smooth manifolds for two decades \cite{bhattacharya-patrangenaru2003,
bhattacharya-patrangenaru2005}, and those theorems allow positive
curvature.

This paper is the first step in a program to prove central limit
theorems on singular stratified spaces, allowing positive curvature
bounded above and eliminating the need to glue flat pieces.  The
present goal, Theorem~\ref{t:isometry-limit-log}, is to prove that
given a point~$p$ in a $\CAT(\kappa)$ space~$\MM$, the shadow
directions for a given tangent vector $Z \in \TpM$ can be collapsed
with no effect on the geometry of convex cones in~$\TpM$ that avoid
the shadow: the collapse is an isometry on every such cone.

Constructing this collapse needs neither a measure on~$\MM$ nor a
stratification, smooth or otherwise; it uses only properties of
$\CAT(\kappa)$ spaces.
Therefore this paper isolates this purely geometric observation from
further arguments based on measure theory \cite{tangential-collapse}
and probability \cite{random-tangent-fields} that form the basis for
central limit theorems on stratified spaces \cite{escape-vectors}.

Basic definitions and elementary consequences surrounding
$\CAT(\kappa)$ spaces are gathered in Section~\ref{s:CAT(k)}.  The
theory of radial transport---that is, parallel transport away from the
apex in a~$\CAT(0)$ cone---occupies Section~\ref{s:radial}.  This
leads in Section~\ref{s:limit} to limit tangent spaces and limit
logarithms, which accomplish shadow collapse by taking logarithm maps
at points approaching the apex along a geodesic ray.

\vspace{-.5ex}
\subsection*{Acknowledgements}
DT was partially funded by DFG HU 1575/7.  JCM thanks the NSF RTG
\enlargethispage{.5ex}%
grant DMS-2038056 for general support.
\vspace{-.5ex}

\section{\texorpdfstring{$\CAT(\kappa)$}{CAT(k)} spaces}\label{s:CAT(k)}

\subsection{Angles and logarithm maps}\label{b:angles}
\mbox{}\medskip

\vspace{-.5ex}
\noindent
For more on $\CAT(\kappa)$ spaces, consult a metric geometry text,
such as \cite{BBI01}.

\begin{defn}[Injectivity radius]\label{d:inj-radius}
For any $\kappa \in \RR$, a \emph{model space of curvature~$\kappa$}
is a Riemannian manifold~$M_\kappa$ with geodesic distance
$\dd_\kappa$ and constant curvature~$\kappa$.  The \emph{injectivity
radius} of~$M_\kappa$ is $R_\kappa = \pi/\sqrt\kappa$ when $\kappa >
0$ and $R_\kappa = \infty$ if $\kappa < 0$.
\end{defn}

\begin{defn}[Comparison triangle]\label{d:comparison-triangle}
Given a triangle $\triangle xyz$ (a union of geodesics $x$ to~$y$
to~$z$ to~$x$) in a metric space $(\MM,\dd)$, a \emph{comparison
triangle} of $\triangle xyz$ in a model space $(M_\kappa,\dd_\kappa)$
is a triangle $\triangle x'y'z'$ in $M_\kappa$ such that
$\{x',y',z'\}$ is an isometric copy~of~$\{ x,y,z\}$.
\end{defn}

\begin{defn}[$\CAT(\kappa)$ metric space]\label{d:CATk}
A metric space $(\MM,\dd)$ is \emph{$\CAT(\kappa)$} if
\begin{enumerate}
\item%
any two points $x,y\in \MM$ such that $\dd(x,y) < R_\kappa$ can be
joined by a unique geodesic of length~$\dd(x,y)$; and
\item%
for any triangle $\triangle xyz$ in $\MM$ with $\dd(x,y) + \dd(y,z) +
\dd(z,x) < 2R_\kappa$, if $\triangle x'y'z'$ is a comparison triangle
in~$M_\kappa$ of~$\triangle xyz$, then the constant-speed geodesics
$\gamma: [0,1] \to \MM$ from $y$ to~$z$ and $\gamma': [0,1] \to
M_\kappa$ from $y'$ to~$z'$ satisfy, for all $t \in\nolinebreak{}
[0,1]$,
$$
  \dd\bigl(x,\gamma(t)\bigr) \leq \dd_\kappa\bigl(x',\gamma'(t)\bigr).
$$
\end{enumerate}
\end{defn}

\begin{defn}[Angle]\label{d:angle}
Let $\gamma_1\bigl([0,\ve_1)\bigr)$ and
$\gamma_2\bigl([0,\ve_2)\bigr)$ be two shortest geodesics
emanating from~$p$ in $(\MM,\dd)$, parametrized by arclength.  The
\emph{angle} between $\gamma_1$ and $\gamma_2$ is defined by
$$
  \cos\bigl(\angle(\gamma_1,\gamma_2)\bigr)
  =
  \lim_{t,s\to 0}\frac{s^2+t^2-\dd^2(\gamma_1(s),\gamma_2(t))}{2st},
$$
if the limit on the right exists.
\end{defn}

Angles between shortest paths exist in the presence of curvature
bounded above.

\begin{prop}\label{p:angles-exist}
If $(\MM,\dd)$ is a $\CAT(\kappa)$ space, then the angle between any
two shortest paths emanating from the same point exists.
\end{prop}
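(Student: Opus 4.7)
The strategy is to interpret both sides of Definition~\ref{d:angle} as comparison angles and then exploit the CAT($\kappa$) inequality to produce monotonicity. For $s, t > 0$ small enough that $\triangle p\,\gamma_1(s)\,\gamma_2(t)$ has perimeter less than~$2R_\kappa$, let $\tilde\angle_\kappa(s,t)$ denote the angle at~$p'$ in the $\kappa$-comparison triangle $\triangle p'x'y'$ of Definition~\ref{d:comparison-triangle}, and let $\tilde\angle_0(s,t) \in [0,\pi]$ be the Euclidean comparison angle, defined by
$\cos\bigl(\tilde\angle_0(s,t)\bigr) = \bigl(s^2+t^2-\dd^2(\gamma_1(s),\gamma_2(t))\bigr)/(2st)$;
the right-hand side lies in $[-1,1]$ by the triangle inequality in~$\MM$. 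Proving that the limit in Definition~\ref{d:angle} exists thus reduces to proving that $\tilde\angle_0(s,t)$ converges as $s,t \to 0^+$.

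The key step is monotonicity: I would show that $\tilde\angle_\kappa(s',t') \le \tilde\angle_\kappa(s,t)$ whenever $0 < s' \le s$ and $0 < t' \le t$. Applying Definition~\ref{d:CATk}(ii) to the side $[p,\gamma_1(s)]$ of the original triangle with $\gamma_2(t)$ as the opposite vertex yields $\dd(\gamma_1(s'),\gamma_2(t)) \le \dd_\kappa(x'(s'), y')$, where $x'(s')$ denotes the point on $[p',x']$ at $\dd_\kappa$-distance $s'$ from~$p'$. In~$M_\kappa$, the $\kappa$-law of cosines makes the angle between two sides of prescribed lengths a strictly increasing function of the opposite side length; since the triangle $\triangle p'\,x'(s')\,y'$ has angle $\tilde\angle_\kappa(s,t)$ at~$p'$ (its sides from~$p'$ lie on the rays $[p',x']$ and $[p',y']$), this gives $\tilde\angle_\kappa(s',t) \le \tilde\angle_\kappa(s,t)$. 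Applying the same argument to the sub-triangle $\triangle p\,\gamma_1(s')\,\gamma_2(t)$ along the side $[p,\gamma_2(t)]$ yields $\tilde\angle_\kappa(s',t') \le \tilde\angle_\kappa(s',t)$, completing the monotonicity.

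Because $\tilde\angle_\kappa \ge 0$ and is monotone non-increasing as $s,t$ decrease, the net $\tilde\angle_\kappa(s,t)$ converges to its infimum as $s,t \to 0^+$. A Taylor expansion of the $\kappa$-law of cosines about $r=0$ (using $\cos(\sqrt{|\kappa|}\,r) = 1 - \kappa r^2/2 + O(r^4)$ for $\kappa > 0$ and the hyperbolic analogue for $\kappa < 0$) then shows that $|\tilde\angle_\kappa(s,t) - \tilde\angle_0(s,t)| \to 0$ as $s,t \to 0^+$, so $\tilde\angle_0(s,t)$ converges to the same limit. The main obstacle is the monotonicity step: one must invoke the CAT($\kappa$) inequality twice---once on the original triangle and once on a shrunken sub-triangle---and then convert a side-length inequality in~$M_\kappa$ into an angle inequality via the $\kappa$-law of cosines. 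Everything after monotonicity is a routine passage to a limit.
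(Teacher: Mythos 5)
Your proposal is correct and is essentially the same argument the paper relies on: the paper's proof simply cites \cite{BBI01} (Exercise~4.6.3, proved like Proposition~4.3.2 there), and the content of that cited proof is exactly the two-step comparison-angle monotonicity you establish via the $\CAT(\kappa)$ inequality and the $\kappa$-law of cosines, followed by the routine passage from $\tilde\angle_\kappa$ to $\tilde\angle_0$ as $s,t\to 0^+$. I see no gaps.
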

\begin{proof}
This is \cite[Exercise~4.6.3]{BBI01} with a proof similar to that of
\cite[Proposition~4.3.2]{BBI01}.
\end{proof}

\begin{defn}[Space of directions]\label{d:directions}
The \emph{space of directions} $\SpM$ at a point $p$ in a
$\CAT(\kappa)$ space $(\MM,\dd)$ is the set of equivalence classes of
shortest paths parametrized by arclength emanating from~$p$, where two
shortest paths are equivalent if the angle between them is~$0$.
\end{defn}

\begin{prop}\label{p:angular-metric}
There is a metric $\dd_s$ on $\SpM$ such that $(\SpM, \dd_s)$ is a
length space and $\dd_s(V, W) = \angle(V, W)$ for any $V, W \in
\SpM$ with $\angle(V, W) < \pi$, where $\angle(V, W)$ denotes the
angle between $V$ and~$W$.
\end{prop}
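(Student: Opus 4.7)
The plan is to verify first that the angle function $\angle$ descends to a genuine metric on $\SpM$ taking values in $[0,\pi]$, then to take $\dd_s$ to be the associated length (intrinsic) metric, and finally to compare $\dd_s$ and $\angle$ in the subcritical regime $\angle < \pi$.

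By Proposition~\ref{p:angles-exist} the angle is defined on any pair of shortest paths emanating from~$p$, and Definition~\ref{d:directions} makes it descend to a symmetric function $\angle\colon \SpM\times\SpM\to[0,\pi]$ with $\angle(V,W)=0\iff V=W$. The remaining point is the triangle inequality $\angle(V_1,V_3)\le\angle(V_1,V_2)+\angle(V_2,V_3)$. I would prove this by fixing representatives $\gamma_i$ and small parameters $t_1,t_2,t_3>0$, forming the three model-space comparison triangles on the pairs $(\gamma_i(t_i),\gamma_j(t_j))$ as in Definition~\ref{d:comparison-triangle}, and using the $\CAT(\kappa)$ inequality of Definition~\ref{d:CATk} together with monotonicity/continuity of the $\kappa$-law of cosines in the sizes of the sides. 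Letting $t_i\to 0$ yields the triangle inequality, following the template of \cite[Proposition~4.3.2]{BBI01}.

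Given that $\angle$ is a metric on $\SpM$, define
$$
   \dd_s(V,W) \;=\; \inf \sum_{i=1}^{n} \angle(V_{i-1},V_i),
$$
with the infimum taken over all finite chains $V=V_0,V_1,\ldots,V_n=W$ in $\SpM$. Equivalently $\dd_s(V,W)$ is the infimum of $\angle$-lengths of continuous curves from $V$ to $W$. That $(\SpM,\dd_s)$ is a length space is immediate from this construction, since the intrinsic metric of any metric is itself intrinsic. The inequality $\dd_s(V,W)\ge\angle(V,W)$ also follows immediately, by iterating the triangle inequality for $\angle$ along any chain.

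For the reverse inequality $\dd_s(V,W)\le\angle(V,W)$ when $\angle(V,W)<\pi$, I would exhibit a chain in $\SpM$ from $V$ to $W$ whose total $\angle$-length is arbitrarily close to $\angle(V,W)$. Fix representative rays $\gamma_V,\gamma_W$; for small $t>0$, join $\gamma_V(t)$ to $\gamma_W(t)$ by the unique shortest geodesic $\sigma_t$ provided by Definition~\ref{d:CATk}(1). Subdividing $\sigma_t$ into many short subgeodesics and connecting each endpoint to $p$ by its unique shortest geodesic produces a chain of directions $V=U_0,U_1,\ldots,U_n=W$ in $\SpM$. The comparison inequality of Definition~\ref{d:CATk}(2) bounds the $\angle$-angle of each subchain above by the corresponding angle in the $M_\kappa$-comparison triangle over $(p,\sigma_t(s_{i-1}),\sigma_t(s_i))$, and by the $\kappa$-law of cosines the sum of these model angles converges to $\angle(V,W)$ as both $t$ and the subdivision mesh tend to $0$. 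The main obstacle will be exactly this step: one must check that the assumption $\angle(V,W)<\pi$ keeps every intermediate point $\sigma_t(s)$ within the injectivity radius of $p$ for $t$ sufficiently small (so that the chain is well-defined in $\SpM$), and that the limit of the model-angle sums is $\angle(V,W)$ rather than some strictly larger value. Once these two facts are in hand, combining with the easy direction yields $\dd_s(V,W)=\angle(V,W)$, completing the proof.
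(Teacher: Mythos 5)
The paper does not prove this statement itself --- it cites \cite[Lemma~9.1.39]{BBI01} --- so you are reconstructing the argument from scratch. Your overall plan is the right one, but your definition of $\dd_s$ breaks the proof. Once $\angle$ satisfies the triangle inequality (which you correctly establish first), the quantity $\inf\sum_i\angle(V_{i-1},V_i)$ over finite chains is \emph{identically equal} to $\angle(V,W)$: the triangle inequality bounds every chain sum below by $\angle(V,W)$, and the one-step chain $V_0=V$, $V_1=W$ achieves it. So your displayed formula just defines $\dd_s=\angle$, which makes the identity below $\pi$ vacuously true and the length-space claim false in general. (Take $\MM$ a Euclidean cone of total angle $3\pi$, so that $\SpM$ is a circle of circumference $3\pi$ and $\angle$ is the arc-length distance truncated at $\pi$; two directions at arc distance $3\pi/2$ have $\angle=\pi$, but every path joining them has $\angle$-length at least $3\pi/2$, so $(\SpM,\angle)$ is not a length space.) The chain infimum is \emph{not} equivalent to the infimum of $\angle$-lengths of continuous curves; that equivalence is exactly what fails for non-intrinsic metrics, and the whole content of the proposition is that $\angle$ is intrinsic only below $\pi$. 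You must define $\dd_s$ as the induced length metric, the infimum of $\angle$-lengths of continuous paths from $V$ to $W$ (allowing the value $+\infty$). With that definition $(\SpM,\dd_s)$ is a length space and $\dd_s\ge\angle$ for the reasons you give.

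With the definition repaired, your subdivision argument addresses the genuinely hard inequality $\dd_s(V,W)\le\angle(V,W)$ when $\angle(V,W)<\pi$, but it needs two upgrades. First, a chain does not bound a length metric from above: you need a continuous curve in $\SpM$, namely $s\mapsto$ the direction at~$p$ of the geodesic from $p$ to $\sigma_t(s)$, and you must bound its length, i.e.\ the supremum of chain sums over \emph{all} partitions, not the limit over fine ones. Second, the claim that the sum of model angles tends to $\angle(V,W)$ requires a one-sided monotonicity statement, which is Alexandrov's lemma: because consecutive segments $\sigma_t(s_{i-1})\sigma_t(s_i)$ and $\sigma_t(s_i)\sigma_t(s_{i+1})$ concatenate to a geodesic, the comparison angles at~$p$ satisfy $\sum_i\widetilde\angle_\kappa\bigl(p;\sigma_t(s_{i-1}),\sigma_t(s_i)\bigr)\le\widetilde\angle_\kappa\bigl(p;\gamma_V(t),\gamma_W(t)\bigr)$ for every partition, and the right-hand side tends to $\angle(V,W)$ as $t\to0$. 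This gives a partition-independent upper bound on the curve's $\angle$-length, which is what closes the argument; your concerns about staying inside the injectivity radius are then handled simply by taking $t$ small, since the comparison inequality keeps $\dd\bigl(p,\sigma_t(s)\bigr)$ of order~$t$.
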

\begin{proof}
\cite[Lemma~9.1.39]{BBI01}.
\end{proof}

\begin{defn}[Anuglar metric]\label{d:angular-metric}
The \emph{angular metric} on~$\SpM$ is $\dd_s$ in
Proposition~\ref{p:angular-metric}.
\end{defn}

\begin{defn}[Tangent cone]\label{d:tangent-cone}
Suppose that $(\MM,\dd)$ is $\CAT(\kappa)$.  The \emph{tangent cone}
at a point $p \in \MM$ is
$$
  \TpM = \SpM \times [0,\infty) / \SpM \times \{0\},
$$
whose apex is usually also called~$p$ (although it can be called $\OO$
if necessary for clarity).  A~vector $W = W_p \times t$ with $W_p \in
\SpM$ has \emph{length} $\|W\| = t$ in~$\TpM$.
\end{defn}

\begin{defn}[Unit tangent sphere]\label{d:unit-tangent-sphere}
Elements in the \emph{unit tangent sphere} $\SpM$ of~$p$ in~$\MM$ are
identified with unit vectors in~$\TpM$: those of the form $V \times 1$
with $V \in \SpM$.
\end{defn}

\begin{defn}[Inner product]\label{d:inner-product}
Tangent vectors $V, W \in \TpM$ have \emph{inner product}
$$
  \<V,W\>_p = \|V\|\|W\| \cos\bigl(\angle(V,W)\bigr).
$$
The subscript $p$ is suppressed when the point~$p$ is clear from the
context.
\end{defn}

The angular metric $\dd_s$ induces a metric on the tangent cone
$\TpM$ which makes $\TpM$ a length space.
  
\begin{defn}[Conical metric]\label{d:conical-metric}
If~$\MM$ is a $\CAT(\kappa)$ space, $\TpM$ has \emph{conical metric}
$$
  \dd_p(V,W)
  =
  \sqrt{\|V\|^2 + \|W\|^2 - 2\<V,W\>}\
  \text{ for } V,W \in \TpM.
$$
\end{defn}

\begin{remark}\label{r:conical-metric}
The conical metric on $\TpM$ is analogous to the Euclidean metric on
tangent spaces of manifolds in the sense that, for any stratum~$R$
such that $p \in \overline{R}$, the space $(T_p R, \dd_p|_{T_p R})$ is
isometric to a closed subcone of some Euclidean space~$\RR^m$.
\end{remark}

\begin{defn}[Logarithm map]\label{d:log-map}
Fix a $\CAT(\kappa)$ space~$(\MM,\dd)$.  Let $\MM' \subseteq \MM$ be
the set of points with a unique shortest path to~$p$.  The
\emph{logarithm map} (or \emph{log map})
\begin{align*}
  \log_p : \MM' & \to \TpM
\\
              v &\mapsto \dd(p,v)V
\end{align*}
at~$p$ takes each point $v \in \MM'$ to the length $\dd(p,v)$ tangent
vector in the direction of the tangent $V \in \SpM$ to the unit-speed
shortest geodesic from~$p$ to~$v$.
\end{defn}

\begin{defn}[Conical $\CAT(0)$ space]\label{d:MM-is-conical}
A space~$\MM$ that is $\CAT(0)$ is \emph{conical} with apex~$p$ if the
log map $\MM \to \TpM$ is an isometry.
\end{defn}

\begin{remark}\label{r:conical}
The $\CAT(0)$ hypothesis in Definition~\ref{d:MM-is-conical} ensures
that $\log_p$ from Definition~\ref{d:log-map} is globally defined
on~$\MM$.
\end{remark}

\begin{remark}\label{r:exp}
The inverse of~$\log_p$, if it exists, is called the \emph{exponential
map} at~$p$.  It exists when~$\MM$ is a cone with apex~$p$, because in
that case $\log_p$ is an isometry, but in general $\log_p$ may not be
injective even in a small neighborhood of~$p$.  Exponential maps in
situations where $\log_p$ is not injective do not concern the
developments in this paper, but they are crucial for further
applications of the geometry here to central limit theorems; see
\cite[Example~3.7]{tangential-collapse}
for an illustration of local non-injecitivity of~$\log_p$.
\end{remark}
  
\subsection{Geometry of the tangent cone}\label{b:tangent-cone}
\mbox{}\medskip

\noindent
Collected here are properties of the tangent cone~$\TpM$ derived from
the presence of an upper bound on the curvature.

\begin{prop}[{\cite[Theorem~9.1.44]{BBI01}}]\label{p:tangent-cone-is-NPC}
If $\MM$ is a $\CAT(\kappa)$ space then
$(\TpM,\hspace{-.18ex}\dd_p)$ is a length space of nonpositive
curvature.
\end{prop}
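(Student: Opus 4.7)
The plan is to prove the two assertions separately: first that $(\TpM,\dd_p)$ is a length space, and then that it has nonpositive curvature. The strategy for the curvature bound is to upgrade the statement by showing that $(\TpM,\dd_p)$ is in fact globally $\CAT(0)$, which is the natural conclusion of Berestovski\u\i's cone construction and which immediately implies nonpositive curvature in the sense of Alexandrov.

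For the length space property, recall from Proposition~\ref{p:angular-metric} that $(\SpM,\dd_s)$ is a length space, and note that the angular metric has diameter at most~$\pi$ by construction. The conical metric $\dd_p$ on $\TpM = \SpM \times [0,\infty)/\SpM \times \{0\}$ is by definition the Euclidean cone metric over $(\SpM,\dd_s)$. A standard construction concatenates a path in $\SpM$ of controlled length with linear radial segments to produce, between any two vectors $V,W \in \TpM$, a path whose $\dd_p$-length approximates $\dd_p(V,W)$ arbitrarily closely. This yields the length space property essentially formally from the corresponding statement for~$\SpM$.

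For the curvature bound, the plan is to first show that $(\SpM,\dd_s)$ is a $\CAT(1)$ space, then invoke the theorem of Berestovski\u\i\ (see \cite[Theorem~4.7.1]{BBI01}) asserting that the Euclidean cone over any $\CAT(1)$ space is $\CAT(0)$. To show $\SpM$ is $\CAT(1)$, I would take three directions $U,V,W \in \SpM$ represented by unit-speed shortest geodesics $\gamma_U,\gamma_V,\gamma_W$ emanating from~$p$, and examine the triangle $\triangle(\gamma_U(t),\gamma_V(t),\gamma_W(t))$ in $\MM$ for small $t>0$. The $\CAT(\kappa)$ hypothesis gives comparison triangles in~$M_\kappa$, whose angles at the vertex $\gamma_U(t)$, say, dominate those of the original triangle. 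Rescaling by~$1/t$ and passing to the limit $t \to 0$, the model-space geometry becomes Euclidean (the curvature~$\kappa$ has negligible effect at infinitesimal scales), while the side lengths converge by Definition~\ref{d:angle} to the spherical side lengths $\angle(U,V),\angle(V,W),\angle(U,W)$. The resulting limit inequality is precisely the $\CAT(1)$ comparison for the spherical triangle $UVW$ in $\SpM$, after translating from the Euclidean cone comparison back to spherical language via the cosine law.

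The main obstacle is the limit argument in the previous paragraph: one must verify uniform convergence of side lengths of the rescaled triangles, handle the case where angles approach~$\pi$ (so the comparison triangle degenerates), and ensure that the angular metric $\dd_s$, not merely the pointwise angle $\angle(\cdot,\cdot)$, satisfies the triangle comparison (since $\dd_s$ may differ from~$\angle$ when angles exceed~$\pi$). The first two issues are handled by the existence of angles (Proposition~\ref{p:angles-exist}) combined with standard uniform estimates for the deviation of $M_\kappa$ from Euclidean geometry at small scales; the last is absorbed by passing from $\angle$ to $\dd_s$ using Proposition~\ref{p:angular-metric} and the fact that both metrics agree whenever the relevant comparison is nontrivial.
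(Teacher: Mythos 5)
The paper does not prove this proposition at all: it is quoted verbatim from \cite[Theorem~9.1.44]{BBI01}, so the only fair comparison is with the proof in that reference. Your architecture --- cone over the link is a length space; show $(\SpM,\dd_s)$ is $\CAT(1)$; apply Berestovski\u\i's theorem \cite[Theorem~4.7.1]{BBI01} --- is the standard and correct skeleton, and you are right not to invoke the paper's Corollary~\ref{c:S-is-CAT(1)}, which would be circular since that corollary is deduced \emph{from} the NPC property of the tangent cone. But the step you delegate to a blow-up argument is exactly where the entire difficulty of Nikolaev's theorem lives, and your sketch does not close it. Rescaling the triangles $\triangle(\gamma_U(t),\gamma_V(t),\gamma_W(t))$ and letting $t\to 0$ controls only the three vertex directions and the pairwise angles between them; the $\CAT(1)$ condition for $\SpM$ is a comparison of distances from a vertex to \emph{points along a geodesic of $(\SpM,\dd_s)$} joining the other two vertices, and such geodesics are intrinsic objects of the link that are in general not realized as limits of rescaled configurations in $\MM$ (think of the link of the origin in the plane with a quadrant removed, or of a vertex in a polyhedral complex, where a shortest path in the link traverses many directions that do not lie on any single small triangle in $\MM$). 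Passing from ``vertex angles are dominated by comparison angles'' to ``curvature $\le 1$'' requires either the equivalence of the angle condition with the distance condition (which itself needs local existence of geodesics in $\SpM$, hence some completeness/compactness input the proposition does not assume) or a genuinely different argument; this is the substantive content of \cite[Theorem~9.1.44]{BBI01} and it is missing from your proposal.

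Two smaller points. First, your claim that ``the angular metric has diameter at most $\pi$ by construction'' is false: the pointwise angle $\angle(V,W)$ is at most $\pi$, but $\dd_s$ is the induced length metric and can exceed $\pi$ (the link of the origin in the plane minus a quadrant is an arc of length $3\pi/2$). This does not damage the length-space paragraph --- the Euclidean cone over any length space is a length space --- but the same confusion resurfaces in your final paragraph, where you worry about ``angles exceeding $\pi$'': it is $\dd_s$, never $\angle$, that exceeds $\pi$, and the correct reconciliation is $\angle(V,W)=\min\bigl(\dd_s(V,W),\pi\bigr)$, which is what makes the paper's conical metric agree with the Euclidean cone metric over $(\SpM,\dd_s)$. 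Second, note that upgrading to globally $\CAT(0)$ as you propose is stronger than what the proposition asserts and than what \cite[Theorem~9.1.44]{BBI01} gives without further hypotheses; the paper reserves that upgrade for Proposition~\ref{p:TpM_is_NPC}, where local compactness is assumed.
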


\begin{remark}\label{r:curvature-boundedness}
Proposition~\ref{p:tangent-cone-is-NPC} is the reason why our theorems
have hypotheses bounding the curvature above: it naturally endows the
tangent cone~$\TpM$ with a conical metric~$\dd_p$
(Definition~\ref{d:conical-metric}) that is nonpositively
curved~(NPC).  Consequently, pushing the geometry of sampling forward
to~$\TpM$ yields a Fr\'echet function on an NPC space, which is convex
and thus \cite{sturm2003} has a unique mean.
\end{remark}

The first variation formula for the distance function from
\cite{BBI01} tells us that the distance function on $(\MM,\dd)$ has
first-order derivative.  The exponential $\gamma(t)$ in the statement
is a unit-speed geodesic whose tangent at~$q$ is~$V$.

\begin{prop}[First variation formula, \cite{BBI01}]\label{p:gradient-distance}
Let $p$ and $q$ be two different points in a $\CAT(\kappa)$
space~$\MM$.  Suppose that $V \in T_q\MM$ is a vector of unit length
in the tangent cone of~$q$, with exponential geodesic $\gamma(t) =
\exp_q(tV)$.  Then the function
\begin{align*}
  \ell: [0,1) & \to \RR
\\
            t &\mapsto \dd(p,\gamma(t))
\end{align*}
is right differentiable at $t = 0$ and
$$
  \ell'(0+)
  =
  \lim_{t\to 0}\frac{\ell(t)-\ell(0)}{t}
  =
  -\cos\angle(\log_qp,V).
$$
\end{prop}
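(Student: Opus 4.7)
The plan is to sandwich $\ell'(0+)$ between two bounds, both equal to $-\cos\alpha$, where $\alpha := \angle(\log_q p, V)$. Set $L = \dd(p, q)$ and let $\eta : [0, L] \to \MM$ be the unit-speed geodesic from $q$ to $p$, so the unit tangent of $\eta$ at $q$ is $\log_q p / L$. The key preliminary observation is that for every $t > 0$ the angle at $q$ between the sub-geodesics $\eta$ and $\gamma|_{[0, t]}$ still equals $\alpha$, because the initial tangents at $q$ are unchanged by truncation.

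For the lower bound, I would apply standard Alexandrov angle comparison in $\CAT(\kappa)$: the actual angle $\alpha$ at $q$ is dominated by the $M_\kappa$-comparison angle $\tilde\alpha_t$ of the triangle $\triangle qp\gamma(t)$ whose side lengths are $L, t, \ell(t)$. The relevant comparison triangle exists for $t$ small since $L < R_\kappa$ by Definition~\ref{d:CATk}\,(1). Substituting $\cos\alpha \ge \cos\tilde\alpha_t$ into the law of cosines in $M_\kappa$ and Taylor expanding to first order in $t$ yields
$$
  \ell(t)^2 \ge L^2 + t^2 - 2Lt\cos\alpha + o(t).
$$
Writing the left side minus $L^2$ as $(\ell(t) - L)(\ell(t) + L)$, dividing by $t$, and letting $t \to 0^+$ (so that $\ell(t) + L \to 2L$) gives $\liminf_{t\to 0^+} (\ell(t) - L)/t \ge -\cos\alpha$.

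For the upper bound, I would invoke Definition~\ref{d:angle} directly, applied to $\eta$ and $\gamma$:
$$
  \dd^2(\eta(s), \gamma(t)) = s^2 + t^2 - 2st\cos\alpha + o(st)
  \quad\text{as } s, t \to 0^+.
$$
The triangle inequality through the intermediate point $\eta(s)$ gives $\ell(t) - L \le \dd(\eta(s), \gamma(t)) - s$. For any $\epsilon > 0$, choose $s$ small enough that the $o(st)$ error is absorbed into $2st\epsilon$ for all small $t$; write $\dd(\eta(s), \gamma(t)) - s = (\dd^2(\eta(s),\gamma(t)) - s^2)/(\dd(\eta(s),\gamma(t)) + s)$, divide by $t$, and let $t \to 0^+$ with $s$ fixed (so $\dd(\eta(s),\gamma(t)) + s \to 2s$). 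The result is a bound of $-\cos\alpha + \epsilon$, and arbitrariness of $\epsilon$ yields $\limsup_{t\to 0^+}(\ell(t)-L)/t \le -\cos\alpha$.

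The main obstacle is the upper bound: $\CAT(\kappa)$ supplies only the one-sided angle comparison used for the lower bound and does not by itself force $\limsup(\ell(t) - L)/t \le -\cos\alpha$. The opposing inequality must instead be extracted from the local asymptotics in Definition~\ref{d:angle}, which requires interposing the auxiliary point $\eta(s)$ and taking the limits in the correct order, namely $t \to 0^+$ first (with $s$ fixed as a function of $\epsilon$), so that the joint $o(st)$ remainder from the angle limit does not spoil the estimate. Passing from $\kappa = 0$ to arbitrary $\kappa$ adds only the bookkeeping of Taylor-expanding $\cos_\kappa$ and $\sin_\kappa$ to first order in $t$.
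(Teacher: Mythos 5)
Your argument is correct and is essentially the standard proof of the first variation formula: the paper itself gives no argument, merely citing \cite[Theorem~4.5.6 and Remark~4.5.12]{BBI01}, whose proof is exactly your sandwich --- comparison-angle monotonicity in $\CAT(\kappa)$ for the lower bound, and the definition of the angle plus the triangle inequality through the intermediate point $\eta(s)$, with $t \to 0^+$ taken before $s$, for the upper bound. The only implicit hypothesis worth flagging is $\dd(p,q) < R_\kappa$ when $\kappa > 0$, needed both for $\log_q p$ to be defined and for the perimeter bound on the comparison triangle $\triangle\, q\, p\, \gamma(t)$, and you have already noted where this enters.
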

\begin{proof}
See \cite[Theorem~4.5.6 and Remark~4.5.12]{BBI01}.
\end{proof}

Proposition~\ref{p:gradient-distance} implies that the angle function
on $\TpM$ is continuous if one vector is fixed.  However, it is
elementary that angles with a fixed basepoint---and hence inner
products---are continuous.

\begin{lemma}\label{l:inner-product-is-continuous}
The inner product function $\<\,\cdot\,,\,\cdot\,\>_p: \TpM \times
\TpM \to \RR$ for a fixed basepoint in a $\CAT(\kappa)$ space from
Definition~\ref{d:inner-product} is continuous.
\end{lemma}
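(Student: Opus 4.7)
The plan is to reduce continuity of the inner product to continuity of the conical metric~$\dd_p$ and of the norm $\|\,\cdot\,\|$ on~$\TpM$, both of which are built into Definition~\ref{d:conical-metric} and Proposition~\ref{p:tangent-cone-is-NPC}. The key observation is that the defining equation
$$
  \dd_p(V,W)^2 = \|V\|^2 + \|W\|^2 - 2\<V,W\>_p
$$
can be solved for the inner product to yield the polarization-type identity
$$
  \<V,W\>_p = \tfrac{1}{2}\bigl(\|V\|^2 + \|W\|^2 - \dd_p(V,W)^2\bigr),
$$
which is valid for every pair of tangent vectors without restriction on the angle.

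With this identity in hand, continuity of $\<\,\cdot\,,\,\cdot\,\>_p$ follows by exhibiting the right-hand side as a composition of continuous functions of the pair~$(V,W) \in \TpM \times \TpM$ equipped with the product topology induced by~$\dd_p$. First, by Proposition~\ref{p:tangent-cone-is-NPC}, $(\TpM,\dd_p)$ is a genuine length space and in particular a metric space, so the map $(V,W) \mapsto \dd_p(V,W)$ is $1$-Lipschitz jointly in its arguments and hence continuous. Next, the norm $V \mapsto \|V\|$ coincides with $\dd_p(V,\OO)$, the distance from~$V$ to the apex in the conical metric, which is likewise $1$-Lipschitz; this takes care of $V \mapsto \|V\|^2$ and $W \mapsto \|W\|^2$.

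Combining these three continuous functions by addition, subtraction, and multiplication by the constant~$\tfrac{1}{2}$ yields continuity of $\<V,W\>_p$ as a function on $\TpM \times \TpM$. No delicate appeal to the first variation formula or to the behavior of $\angle(V,W)$ near~$\pi$ is needed, which is fortunate because the angle itself need not be jointly continuous at antipodal pairs; the inner product sidesteps this issue by absorbing the degenerate behavior into the smooth dependence of $\dd_p^2$ on its arguments. The only thing to verify carefully is that the identity displayed above genuinely holds for all~$V,W$, including when one or both have zero length, but in those cases both sides are manifestly zero. Thus the proof consists essentially of invoking the polarization identity and noting that $\dd_p$ and $\|\,\cdot\,\|$ are continuous, with no real obstacle to overcome.
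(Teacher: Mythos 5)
Your proof is correct, but it takes a genuinely different route from the paper's. The paper works on the level of angles: it observes that for a fixed basepoint the angle is controlled by the angular metric $\dd_s$ on $\SpM\times\SpM$ (Proposition~\ref{p:angular-metric}), so that $\angle(\,\cdot\,,\cdot\,)$ is continuous because distance functions are, and then (implicitly) feeds this into $\<V,W\>_p=\|V\|\|W\|\cos\angle(V,W)$. You instead invert Definition~\ref{d:conical-metric} to get the polarization identity $\<V,W\>_p=\tfrac12\bigl(\|V\|^2+\|W\|^2-\dd_p(V,W)^2\bigr)$ and use only that $\dd_p$ is a bona fide metric (Proposition~\ref{p:tangent-cone-is-NPC}), hence jointly $1$-Lipschitz, with $\|V\|=\dd_p(V,\OO)$. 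Your version buys something: it bypasses the angle entirely, so there is nothing to check about the discrepancy between $\dd_s$ and $\angle$ when the angle reaches~$\pi$, and the apex (where the angle is undefined) is handled automatically; it is also the argument that generalizes verbatim to any metric cone. The paper's version makes visible the geometric content--that angle with a fixed basepoint is itself (essentially) a metric on directions--which is reused elsewhere. One small quibble: your parenthetical claim that the angle ``need not be jointly continuous at antipodal pairs'' is not right for a \emph{fixed} basepoint, since the angle satisfies the triangle inequality on $\SpM$ and is therefore $1$-Lipschitz with respect to itself; the genuine failure of continuity, as in Remark~\ref{r:angle-not-continuous}, occurs only when the basepoint varies. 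This does not affect your argument, which never uses the claim.
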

\begin{proof}
The angle between pairs of unit vectors with a fixed basepoint is a
continuous function $\angle(\,\cdot\,,\cdot\,): \SpM \times \SpM \to
\RR$ by Proposition~\ref{p:angular-metric}, because distance functions
are continuous.
\end{proof}

\begin{remark}\label{r:angle-not-continuous}
The angle between pairs of vectors need not be continuous if the
basepoint is not fixed; see \cite[Section~4.3.3]{BBI01} for discussion
of this matter.  A~simple example to illustrate this behavior is the
plane~$\RR^2$ with open first quadrant removed.  Two perpendicular
rays~$\gamma_1$ and~$\gamma_2$ emanating from~$p$ converge to~$\OO x$
and~$\OO y$ as $p \to \OO$ along the negative part of the ray $x =
-y$; see Figure~\ref{f:angle-discont}.  While the angle between
$\gamma_1$ and~$\gamma_2$ is~$\pi/2$ before $p$ reaches~$\OO$, at the
limit the angle between~$\OO x$ and~$\OO y$ is~$\pi$.  This phenomenon
does not depend on there being a topological boundary: the whole
picture embeds in the kale \cite{kale-2015} with central angle~$5\pi/2$.
That is, nothing changes when two quadrants are glued onto the picture
in Figure~\ref{f:angle-discont}, one on the positive horizontal
$x$-axis and one on the positive vertical $y$-axis.
\end{remark}

\begin{figure*}[!ht]
\centering
  \includegraphics[width=0.35\textwidth]{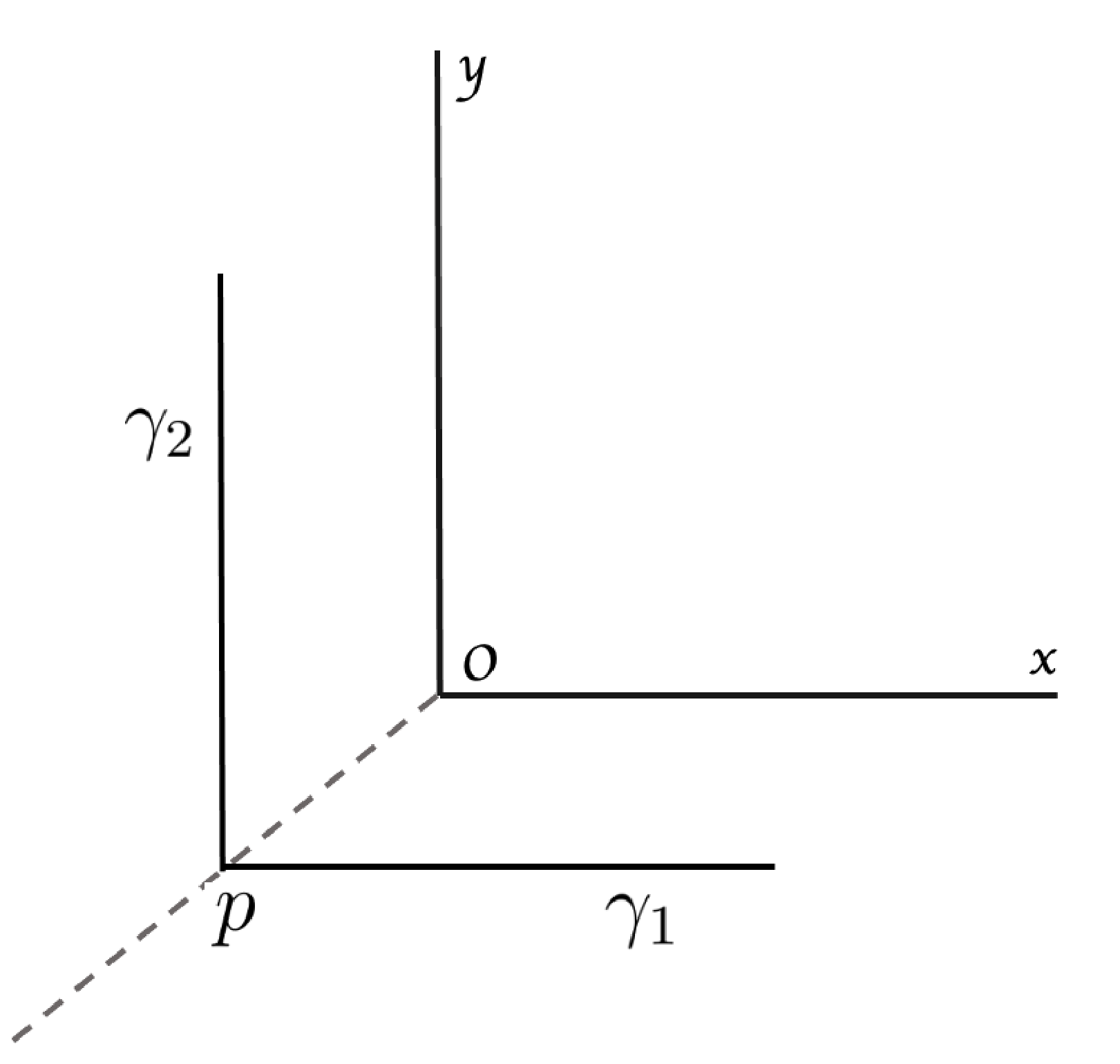}
\caption{Angle is not continuous.  The angle between $\gamma_1$ and
$\gamma_2$ is $\pi/2$ while the angle between $\OO x$ and $\OO y$ is $\pi$.}
\label{f:angle-discont}
\end{figure*}

A particularly useful property of $(\TpM,\dd_p)$ is that triangles in
$\TpM$ with one vertex at the apex are flat.

\begin{lemma}[Lemma~3.6.15, \cite{BBI01}]\label{l:flat-triangle}
Fix a $\CAT(\kappa)$ space~$\MM$.  For $V,W \in \TpM$ such that
$\angle(V,W) < \pi$, let $U(s), s\in [0,1]$ be a geodesic of constant
speed in $(\TpM,\dd_p)$ joining $V$ to~$W$.  Then $tU(s)$ for $s\in
[0,1]$ is a geodesic from~$tW$ to~$tV$ for all $t \in [0,1]$.  In
particular, any triangle in $\TpM$ with one vertex at the apex is
flat.
\end{lemma}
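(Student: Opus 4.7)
The plan is to derive both claims from the conical metric formula of Definition~\ref{d:conical-metric}, using the $\CAT(0)$ structure of $\TpM$ (Proposition~\ref{p:tangent-cone-is-NPC}) only for a rigidity step at the end.

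First I would show that scalar multiplication by $t \in [0,1]$ is a $t$-homothety of $(\TpM, \dd_p)$. Because the norm and the inner product (Definition~\ref{d:inner-product}) both scale homogeneously in each argument, Definition~\ref{d:conical-metric} gives
\[
  \dd_p(tV', tW')^2 = t^2\|V'\|^2 + t^2\|W'\|^2 - 2t^2\<V',W'\> = t^2\,\dd_p(V',W')^2
\]
for all $V', W' \in \TpM$. Applying this with $V' = U(s)$ and $W' = U(s')$ shows that $s \mapsto tU(s)$ is a constant-speed path of total length $t\,\dd_p(V,W)$ connecting $tV$ to $tW$; since also $\dd_p(tV, tW) = t\,\dd_p(V, W)$ by the same scaling, the path's length equals the distance between its endpoints, so it is itself a geodesic. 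This settles the first assertion.

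For the second assertion, I would establish flatness of $\triangle 0VW$ by matching its apex angle with that of its Euclidean comparison triangle. That comparison triangle in $\RR^2$ has sides of length $\|V\|, \|W\|, \dd_p(V,W)$, and its apex angle $\tilde\alpha$, computed via the Euclidean law of cosines, satisfies
\[
  \cos\tilde\alpha
  = \frac{\|V\|^2 + \|W\|^2 - \dd_p(V,W)^2}{2\|V\|\|W\|}
  = \cos\angle(V,W)
\]
by Definition~\ref{d:conical-metric}. The Alexandrov angle at~$0$ between the two radial segments to~$V$ and~$W$ is also $\angle(V,W)$ by the construction of the tangent cone, so actual and comparison angles at the apex agree. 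The standard rigidity principle for $\CAT(0)$ triangles---equality of the actual and the comparison angle at a vertex forces the triangle to be isometric to its Euclidean comparison triangle---then yields flatness of $\triangle 0VW$.

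The main obstacle is that final rigidity step, which is the one nontrivial input from $\CAT(0)$ theory and the only reason to invoke Proposition~\ref{p:tangent-cone-is-NPC}. If one prefers a self-contained route, the homothety above scales $U$ into a one-parameter family of geodesics from $tV$ to $tW$; these together with the two radial segments $[0,V]$ and $[0,W]$ sweep out the triangle, and matching distances with the Euclidean comparison triangle reduces to showing that $\|U(s)\|$ agrees with the radial Euclidean coordinate of the corresponding comparison point, an identity obtainable from the $\CAT(0)$ inequality in one direction and from Alexandrov's lemma applied to the subdivision of $\triangle 0VW$ by the segment $[0,U(s)]$ in the other.
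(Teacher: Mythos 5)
Your proposal is correct. For the first assertion it is essentially the paper's own argument: both proofs rest on the homogeneity identity $\dd_p(tV',tW') = t\,\dd_p(V',W')$ coming straight from Definition~\ref{d:conical-metric}; the paper phrases the conclusion as the betweenness equality $\dd_p(tV,tW) = \dd_p(tV,tU(s)) + \dd_p(tU(s),tW)$, while you phrase it as ``length of the scaled path equals the distance between its endpoints,'' which is the same computation. Where you genuinely diverge is the flatness claim. The paper stops after the betweenness identity and implicitly leans on the citation to \cite{BBI01} for the full statement that the filled triangle $\{tU(s)\}$ is isometric to a Euclidean triangle --- the geodesic-scaling property alone does not immediately give the distances $\dd_p(tU(s), t'U(s'))$ for $t \neq t'$. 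Your angle-rigidity route (comparison angle at the apex equals the Alexandrov angle by the law of cosines applied to Definition~\ref{d:conical-metric}, then the flat triangle lemma for $\CAT(0)$ spaces) actually supplies that missing step, so it buys a self-contained proof of the ``in particular'' clause at the cost of importing the rigidity principle. One small caution: Proposition~\ref{p:tangent-cone-is-NPC} only gives that $(\TpM,\dd_p)$ is \emph{locally} nonpositively curved, whereas the rigidity lemma needs the $\CAT(0)$ comparison inequality for the specific (possibly large) triangle $\triangle\, 0VW$; you should note that the same homothety you established lets you shrink that triangle into a $\CAT(0)$ neighborhood of the apex and scale back, exactly as in the proof of Proposition~\ref{p:TpM_is_NPC}, so the gap is cosmetic but worth closing explicitly.
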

\begin{proof}
It suffices to show that, for $t,s\in [0,1]$,
$$
  \dd_p(tV,tW) = \dd_p\bigl(tV,tU(s)\bigr) + \dd_p(tU(s),tW).
$$
The above equality holds true for $t = 1$ because $U(s)$ for $s \in
[0,1]$ is a geodesic connecting~$V$ and~$W$.  Thus
$$
  \dd_p(V,W) = \dd_p\bigl(V,U(s)\bigr)+\dd_p\bigl(U(s),W\bigr)
  \text{ for } s \in [0,1].
$$
We now deduce, from the conical metric
(Definition~\ref{d:conical-metric}), that
\begin{align*}
\dd_p(tV,tW)
  & = \sqrt{\norm{tV}^2 + \norm{tW}^2 - 2t^2\norm{V}\norm{W}\cos\bigl(\angle(V,W)\bigr)}
\\
  & = t\sqrt{\norm{V}^2+\norm{W}^2 - 2\norm{V}\norm{W}\cos\bigl(\angle(V,W)\bigr)}
\\
  & = t\dd_p(V,W).
\end{align*}
Similarly, for $s\in [0,1],$
$$
  \dd_p\bigl(tV,tU(s)\bigr)
  =
  t\dd_p\bigl(V,U(s)\bigr)
  \quad\text{and}\quad
  \dd_p\bigl(tU(s),tW\bigr)
  =
  t\dd_p\bigl(U(s),W\bigr).
$$
Therefore
\begin{align*}
\dd_p(tV,tW)
  & = t\dd_p(V,W)
\\
  & = t\bigl(\dd_p(V,U(s)\bigr) + \dd_p\bigl(U(s),W)\bigr)
\\
  & = \dd_p\bigl(tV,tU(s)\bigr) + \dd_p\bigl(tU(s),tW\bigr),
\end{align*}
which completes the proof.
\end{proof}

This section concludes with two easy results that stand on their own
as generally useful but also arise in the intended application of this
theory to measures on smoothly stratified metric spaces; see
\cite[Lemma~2.29]{tangential-collapse},
for example.

\begin{prop}\label{p:TpM_is_NPC}
If $\MM$ is $\CAT(\kappa)$ and locally compact then $(\TpM, \dd_p)$
is~$\CAT(0)$: it is complete space simply connected, and globally
nonpositively curved (NPC).
\end{prop}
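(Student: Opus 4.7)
The plan is to verify the three defining properties of $\CAT(0)$ — completeness, simple connectivity, and global nonpositive curvature — for $(\TpM, \dd_p)$, and then to combine them via the metric Cartan--Hadamard theorem (e.g., \cite[Theorem~9.2.9]{BBI01}), which globalizes the local NPC condition on a complete, simply connected length space. Local NPC is already provided by Proposition~\ref{p:tangent-cone-is-NPC}, so only completeness and simple connectivity need to be established by hand.

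Simple connectivity is the easy step and I would handle it first. The straight-line homotopy $H\colon\TpM\times[0,1]\to\TpM$ given by $H(V,s)=sV$ continuously contracts $\TpM$ onto its apex: continuity follows from the conical metric in Definition~\ref{d:conical-metric} together with the scaling identity $\dd_p(sV,sW)=s\,\dd_p(V,W)$ already embedded in the computation inside the proof of Lemma~\ref{l:flat-triangle}. Hence $\TpM$ is contractible, and in particular simply connected.

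The bulk of the argument is completeness, and this is where local compactness of $\MM$ must enter. Given a Cauchy sequence $(V_n)\subset\TpM$, the conical formula makes $t_n=\|V_n\|$ Cauchy in $[0,\infty)$ with limit $t\ge 0$; the case $t=0$ gives $V_n\to\OO$, so one reduces to $t>0$. For $n,m$ large, $t_n,t_m$ are bounded and bounded away from zero, and the identity
\[
  \dd_p(V_n,V_m)^2 = t_n^2 + t_m^2 - 2t_nt_m\cos\bigl(\angle(V_n,V_m)\bigr)
\]
forces $\cos\angle(V_n,V_m)\to 1$, so the unit directions $v_n=V_n/t_n$ are Cauchy in $(\SpM,\dd_s)$. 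The problem thus reduces to completeness of the space of directions.

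The main obstacle — and the only place local compactness is needed — is this last reduction. I would argue it by choosing $\delta>0$ so small that the closed ball $\overline{B_\delta(p)}\subset\MM$ is compact, representing each $v_n$ by a unit-speed geodesic $\gamma_n\colon[0,\delta]\to\MM$, and using $\CAT(\kappa)$ angle comparison to translate Cauchyness of $v_n$ in $(\SpM,\dd_s)$ into Cauchyness of the endpoints $\gamma_n(\delta)$ in the compact ball. A convergent subsequence produces a limit point $q\in\overline{B_\delta(p)}$; the unique $\CAT(\kappa)$ geodesic from $p$ to $q$ then supplies a direction $v\in\SpM$, and one checks with one more application of comparison that $v_n\to v$ in $\dd_s$ (not merely along the subsequence). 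Hence $V_n\to v\times t$, $\TpM$ is complete, and the metric Cartan--Hadamard theorem upgrades the local NPC condition from Proposition~\ref{p:tangent-cone-is-NPC} to global NPC, completing the proof.
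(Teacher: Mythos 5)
Your proposal is correct in outline but takes a genuinely different route from the paper. The paper gets global NPC \emph{directly} from Proposition~\ref{p:tangent-cone-is-NPC} by exploiting homogeneity of the conical metric: since $\dd_p(sV,sW)=s\,\dd_p(V,W)$, every triangle in $\TpM$ is similar to one lying in an arbitrarily small neighborhood of the apex, where the comparison inequality holds, and that inequality is scale-invariant --- so no globalization theorem is needed at all. It then asserts completeness from local compactness and deduces simple connectivity as a \emph{consequence} of global NPC plus completeness via \cite[Remark~9.2.1]{BBI01}. You instead prove simple connectivity first (by the radial contraction $H(V,s)=sV$, which is fine), prove completeness by hand, and only then invoke the metric Cartan--Hadamard theorem to globalize. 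Both routes are valid; yours is the standard general-purpose one, while the paper's scaling trick is special to cones and makes Cartan--Hadamard (a comparatively heavy tool) unnecessary. Your completeness argument is also considerably more detailed than the paper's one-line assertion, which is a genuine contribution.

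Two cautions on the completeness step. First, the $\CAT(\kappa)$ comparison goes the way $\angle\le\tilde\angle_\kappa$, so Cauchyness of the directions $v_n$ does \emph{not} by itself force Cauchyness of the endpoints $\gamma_n(\delta)$ --- the inequality points the wrong way for that. Your argument survives because you do not actually need it: compactness of $\overline{B_\delta(p)}$ alone yields a convergent subsequence of endpoints, and the inequality in the correct direction (small comparison angle implies small angle) then gives $v_{n_k}\to v$, after which Cauchyness of the full sequence finishes the job. You should phrase that middle step accordingly. Second, you implicitly assume every direction $v_n$ is realized by a geodesic of the \emph{uniform} length $\delta$; in a space that is not geodesically complete a shortest path from $p$ need not extend that far (cf.\ Figure~\ref{f:extension}), so you must either work with representatives of varying lengths or justify the uniform choice. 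The paper's own proof glosses over exactly this point, so this is a shared gap rather than an error unique to your write-up.
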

\begin{proof}
Because the metric $\dd_p$ on $\TpM$ is homogeneous (commutes with
scaling), any triangle can be brought to a similar one in any
neighborhood of the apex in~$\TpM$.  Thus, $\TpM$ is nonpositively
curved in the global sense (\cite[Definition 4.6.6]{BBI01}).  Due to
local compactness, $\TpM$ is complete since the space of directions is
complete.  Now invoke \cite[Remark~9.2.1]{BBI01} to conclude that
$(\TpM, \dd_p)$ is simply connected and hence a global NPC space.
\end{proof}

\begin{cor}\label{c:S-is-CAT(1)}
If $\MM$ is $\CAT(\kappa)$ and locally compact then $(\SpM, d_s)$
is~$\CAT(1)$.
\end{cor}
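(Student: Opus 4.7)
The plan is to invoke the standard characterization of when a Euclidean cone is $\CAT(0)$. By Definition~\ref{d:tangent-cone} the underlying set of~$\TpM$ is the quotient $\SpM \times [0,\infty)/\SpM \times \{0\}$, and combining Definition~\ref{d:conical-metric} with Definition~\ref{d:inner-product} gives
$$
  \dd_p(V,W)^2 = \|V\|^2 + \|W\|^2 - 2\|V\|\|W\|\cos\bigl(\angle(V,W)\bigr),
$$
where, by Proposition~\ref{p:angular-metric}, the angle $\angle(V,W)$ coincides with the angular distance on $\SpM$ whenever it is less than~$\pi$ and equals~$\pi$ otherwise. Equivalently, $\cos\bigl(\angle(V,W)\bigr) = \cos\bigl(\min(\dd_s(V,W),\pi)\bigr)$. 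This is exactly the formula defining the Euclidean cone metric on the cone over $(\SpM,\dd_s)$, so $(\TpM,\dd_p)$ is precisely the Euclidean cone over $(\SpM,\dd_s)$.

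Next I would invoke Proposition~\ref{p:TpM_is_NPC}, which under the local compactness hypothesis establishes that $(\TpM,\dd_p)$ is $\CAT(0)$. The corollary then follows from the classical theorem of Berestovskii in metric geometry (see \cite[Theorem~4.7.1 and Section~4.7]{BBI01}): for a length space $X$, the Euclidean cone over $X$ is $\CAT(0)$ if and only if $X$ is $\CAT(1)$. Since $(\SpM,\dd_s)$ is a length space by Proposition~\ref{p:angular-metric}, applying this equivalence in the direction from cone to base yields that $(\SpM,\dd_s)$ is $\CAT(1)$.

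There is essentially no obstacle beyond bookkeeping: the only conceptual step is recognizing that the conical metric of Definition~\ref{d:conical-metric} is precisely the Euclidean cone metric, so that the Berestovskii characterization applies directly. The local compactness hypothesis enters only through its role in establishing the $\CAT(0)$ property of the tangent cone in Proposition~\ref{p:TpM_is_NPC}; once that proposition is in hand, the corollary is a one-line deduction.
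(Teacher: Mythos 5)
Your proof is correct and follows essentially the same route as the paper: both establish that $(\TpM,\dd_p)$ is $\CAT(0)$ via Proposition~\ref{p:TpM_is_NPC}, identify $(\TpM,\dd_p)$ as the Euclidean cone over $(\SpM,\dd_s)$, and apply \cite[Theorem~4.7.1]{BBI01} to conclude. Your additional verification that the conical metric matches the Euclidean cone metric is a detail the paper leaves implicit, but the argument is the same.
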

\begin{proof}
It follows from Proposition~\ref{p:TpM_is_NPC} that $(\TpM,\dd_p)$ is
a $\CAT(0)$ space.  As $(\TpM,\dd_p)$ is a Euclidean cone over
$(\SpM,\dd_s)$, invoke \cite[Theorem~4.7.1]{BBI01} to conclude that
$(\SpM,\dd_s)$ has curvature bounded above by~$1$.
\end{proof}

\section{Radial transport}\label{s:radial}

\noindent
Defining parallel transport on~$(\TpM,\dd_p)$ for $\CAT(\kappa)$
space~$\MM$ is difficult in general.  For a simple example, excise the
open first quadrant of the plane~$\RR^2$ as in
Figure~\ref{f:extension}; what results is a $\CAT(0)$ stratified space
whose tangent cone at the origin does not compare with any tangent
cones nearby.  However, not all is lost: parallel rays are still
defined on $\CAT(0)$ spaces.  Hence limits of tangent cones can be
taken upon approach to a singular point.  This way of comparing
tangent cones is enough to push the fluctuating cone through a
d\'evissage process and hence compare $\TpM$ to a vector space.
\begin{figure*}[!ht]
\centering
\includegraphics[width=0.3\textwidth]{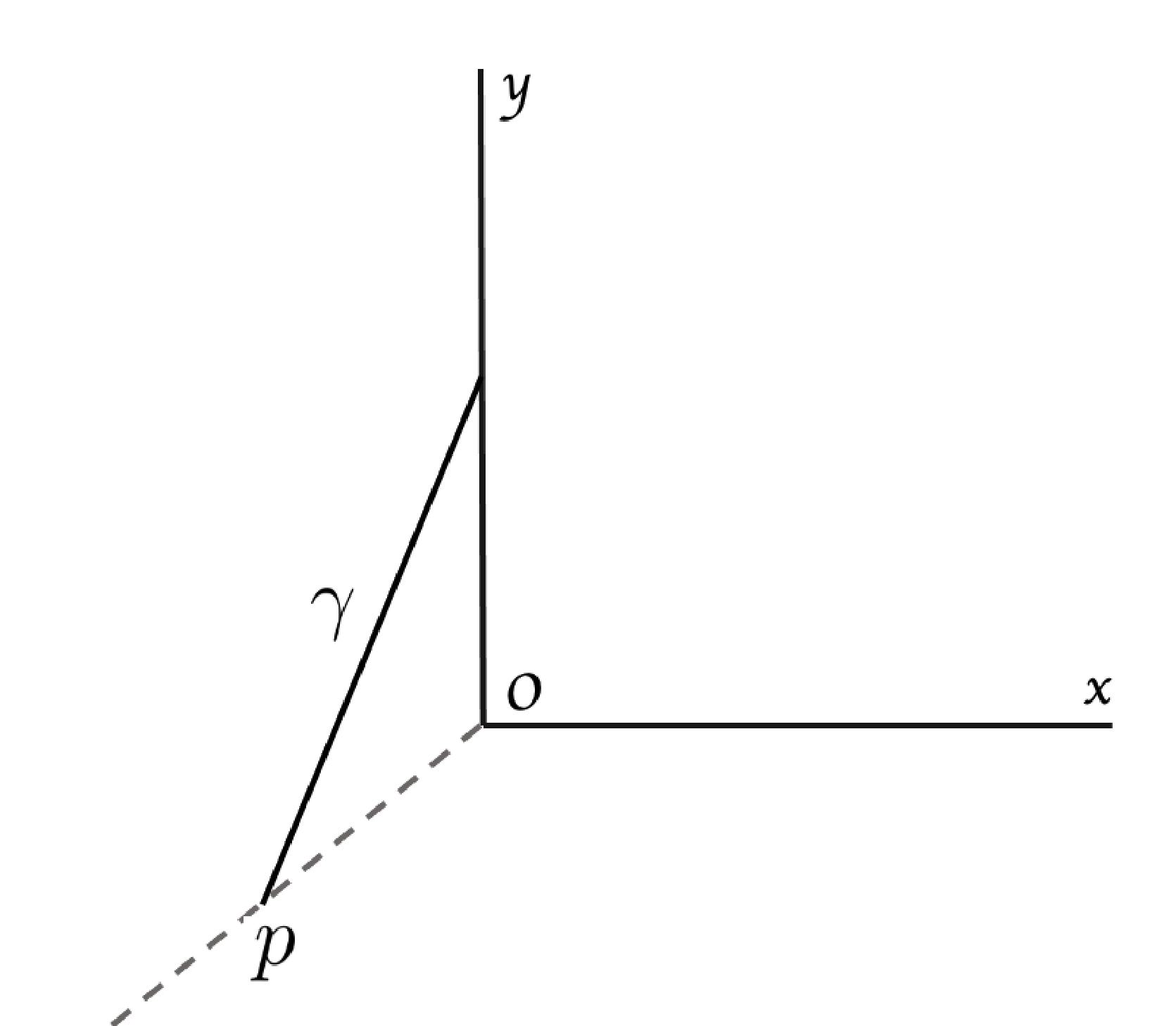}
\caption{The plane with open first quadrant excised.  The geodesic
segment $\gamma$ cannot be extended indefinitely, so no ray is
parallel to it.}
\label{f:extension}
\end{figure*}

The hypotheses for this section start with a $\CAT(\kappa)$
space~$\MM$ and its tangent cone $\XX = \TpM$,
whose apex may be called~$p$ but alternatively may be called~$\OO
\in\nolinebreak \XX$ to distinguish it from~$p \in \MM$.

Although parallel transport in~$\MM$ need not be well defined, only
parallel transport on~$\TpM$ along geodesics starting from the
apex~$p$ is required for our CLT.  (That explains the typical
hypothesis $\XX = \TpM$ in this section.)  This ``radial transport''
works because every triangle with a vertex at the cone point is flat
by Lemma~\ref{l:flat-triangle}.

First recall some background on parallel lines and rays in $\CAT(0)$
spaces such as~$\XX$ from \cite[Chapter~9]{BBI01}, where more details
can be found.

\begin{defn}[Lines]\label{d:line}
A \emph{line} in a $\CAT(0)$ space $\XX$ is a unit-speed geodesic
$\gamma: \RR \to \XX$ such that every closed subinterval of~$\gamma$
is a shortest path in~$\XX$.  A \emph{ray} in~$\XX$ is a half-line
geodesic $\gamma: [0,+\infty) \to \XX$.
\end{defn}

\begin{defn}[Parallel lines]\label{d:parallel-lines}
In a $\CAT(0)$ space, two unit-speed lines or rays $\gamma_1(t)$
and~$\gamma_2(t)$ are \emph{parallel}, written $\gamma_1 \parallel
\gamma_2$, if the function $f(t) =
\dd\bigl(\gamma_1(t),\gamma_2(t)\bigr)$ is bounded.
\end{defn}


\begin{lemma}[{\cite[Proposition 9.2.28]{BBI01}}]\label{l:existence-parallel-ray:BBI}
Fix a point $z$ in a $\CAT(0)$ space~$\XX$.  For any ray
$\gamma\bigl([0,+\infty)\bigr)$ in~$\XX$ there exists a unique ray
parallel to $\gamma$ starting at~$z$.
\end{lemma}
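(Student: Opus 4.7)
The plan is to construct the desired ray $\sigma$ as the limit of the finite-length geodesic segments from $z$ to $\gamma(n)$, then verify parallelism and uniqueness using the convexity of distance characteristic of $\CAT(0)$ spaces. Set $c = \dd(z, \gamma(0))$, and for each positive integer $n$ let $L_n = \dd(z, \gamma(n))$ and let $\sigma_n \colon [0, L_n] \to \XX$ be the unique unit-speed shortest path from $z$ to $\gamma(n)$. The triangle inequality through $\gamma(0)$ gives $|L_n - n| \leq c$. The first step is to show that for every fixed $t \geq 0$, the sequence $\sigma_n(t)$ (defined once $L_n \geq t$) is Cauchy, so that $\sigma(t) = \lim_n \sigma_n(t)$ exists.

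To obtain the Cauchy property, for $m \leq n$ I will compare the triangle $\triangle z\gamma(m)\gamma(n)$ in $\XX$ with its Euclidean comparison triangle, whose angle $\tilde{\alpha}_{m,n}$ at $\tilde{z}$ is determined by the law of cosines
\[
  \cos \tilde{\alpha}_{m,n}
  =
  \frac{L_m^2 + L_n^2 - (n - m)^2}{2\hspace{.1ex} L_m L_n}.
\]
Writing $L_k = k + \delta_k$ with $|\delta_k| \leq c$ and expanding, the deviation $1 - \cos \tilde{\alpha}_{m,n}$ is $O(1/m)$ uniformly in $n \geq m$, so $\tilde{\alpha}_{m,n} \to 0$ as $m \to \infty$. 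The $\CAT(0)$ inequality bounds $\dd\bigl(\sigma_m(t), \sigma_n(t)\bigr)$ by the corresponding Euclidean distance between the two comparison points on $[\tilde{z}, \tilde{\gamma}(m)]$ and $[\tilde{z}, \tilde{\gamma}(n)]$ at distance $t$ from $\tilde{z}$, namely $2 t \sin(\tilde{\alpha}_{m,n}/2)$, which tends to zero. Completeness of $\XX$ then delivers the limit $\sigma(t)$; passing to the limit preserves unit speed and the shortest-path property on each compact subinterval, so $\sigma$ is a ray from $z$.

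Parallelism follows from the standard $\CAT(0)$ fact that the map $t \mapsto \dd\bigl(\sigma_n(t), \gamma(t)\bigr)$ is convex on $[0, L_n]$: it equals $c$ at $t = 0$ and $\dd\bigl(\gamma(n), \gamma(L_n)\bigr) = |n - L_n| \leq c$ at $t = L_n$, so convexity pins it below $c$ on the whole interval. Taking $n \to \infty$ yields $\dd\bigl(\sigma(t), \gamma(t)\bigr) \leq c$ for all $t \geq 0$, so $\sigma \parallel \gamma$. Uniqueness comes almost for free: if $\sigma'$ and $\sigma''$ are two rays from $z$ parallel to $\gamma$, then $t \mapsto \dd\bigl(\sigma'(t), \sigma''(t)\bigr)$ is a nonnegative convex function on $[0, \infty)$ that vanishes at $t = 0$ and is bounded by the triangle inequality; chord comparison $f(t) \leq (t/s) f(s)$ with $s \to \infty$ then forces $f \equiv 0$, so $\sigma' = \sigma''$.

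The main obstacle is the quantitative estimate driving $\tilde{\alpha}_{m,n} \to 0$: the construction collapses if one cannot rule out that the segments $[z, \gamma(n)]$ fan out with a persistent angular spread. Every other step is a routine application of the convexity of distance in $\CAT(0)$ spaces, and completeness of $\XX$ (standing here in the background of the paper's $\CAT(\kappa)$ setup) is used tacitly to form the pointwise limit $\sigma(t)$.
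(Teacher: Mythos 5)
Your proof is correct and is essentially the standard asymptotic-ray argument that the paper delegates entirely to the cited \cite[Proposition~9.2.28]{BBI01}: construct $\sigma$ as the limit of the geodesics $[z,\gamma(n)]$ via a comparison-angle estimate, then get parallelism and uniqueness from convexity of the distance between geodesics.  The one caveat you rightly flag --- completeness of $\XX$, needed to form the pointwise limit --- is not part of the paper's Definition~\ref{d:CATk} but holds in the intended setting $\XX = \TpM$ by Proposition~\ref{p:TpM_is_NPC} (and is a standing hypothesis in the Hadamard-space context of the cited source).
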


\begin{remark}\label{r:parallel}
Fix a $\CAT(\kappa)$ space~$\MM$ and $\XX = \TpM$ with apex~$\OO$.
Fix $z \in \XX$ and a point $q \neq \OO$ on the geodesic~$\OO z$
joining $\OO$ to~$z$.  Suppose that $V_z$ is a vector in~$T_z\XX$
whose exponential $\exp_z V_z$ is defined.  The triangle formed by
$\OO$, $z$, and~$\exp_z V_z$ is flat by Lemma~\ref{l:flat-triangle}.
Hence a unique point~$v_q$ lies on the geodesic from~$\OO$
to~$\exp_zV_z$ such that the geodesic segment $qw_q$ is parallel
to~$\exp_z V_z$ in the Euclidean sense; see~Figure~\ref{f:parallel}.
\begin{figure*}[!ht]
\centering
\includegraphics[width=0.3\textwidth]{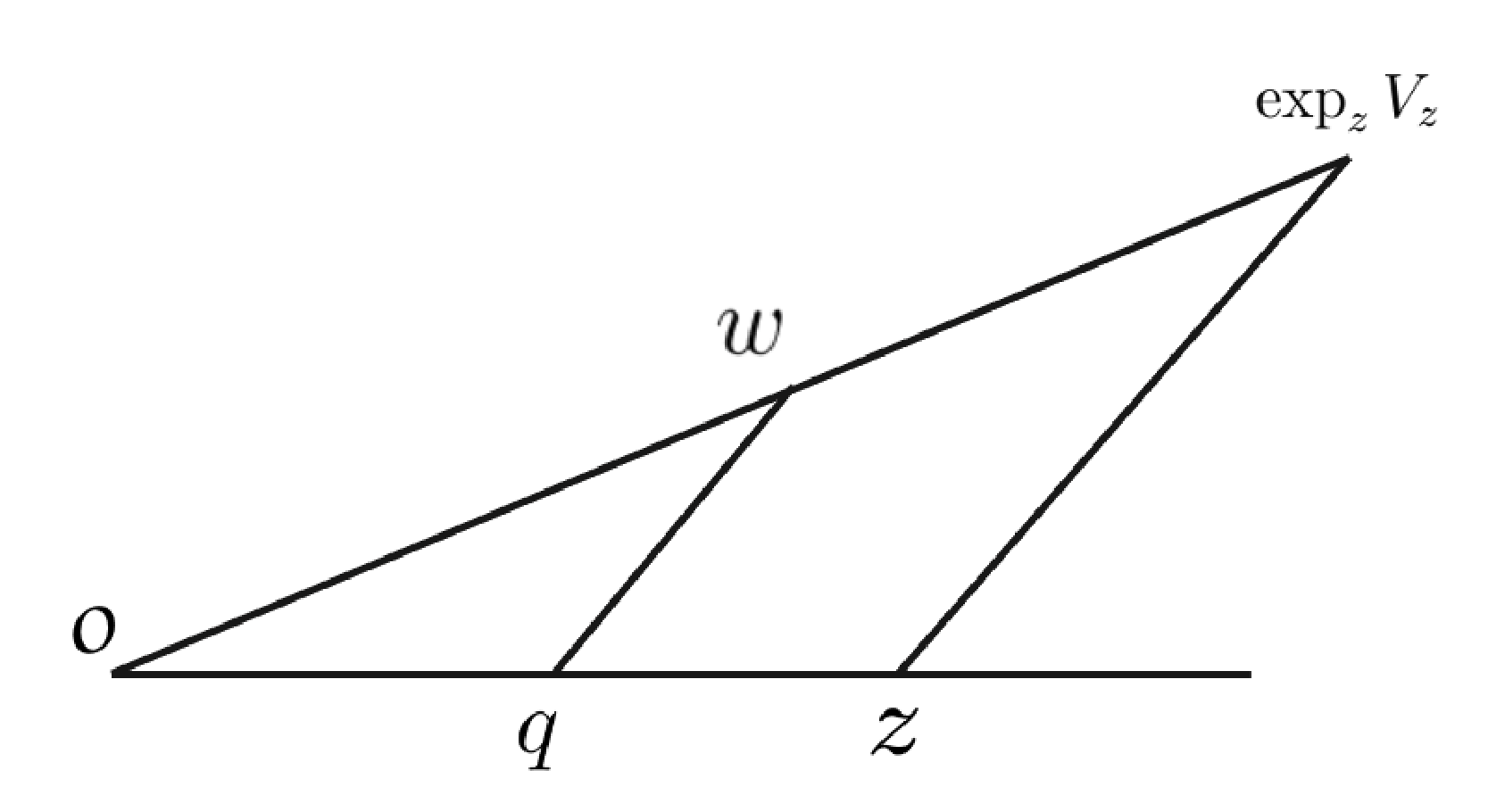}
\caption{A Euclidean (flat) sector in $\XX$ spanned by $\OO z$ and
$\OO v_z$ where $v_z = \exp_zV_z$.  For any non-apex $q$ on $\OO z$,
there is only one $v_q$ on~$\OO v_z$ such that $qv_q \parallel zv_z$
in the Euclidean sense.}
\label{f:parallel}
\end{figure*}
\end{remark}

\begin{defn}[Parallel vectors]\label{d:parallel-vectors}
The two vectors $V_q = \log_q v_q$ and $V_z$ in
Remark~\ref{r:parallel} are \emph{parallel}.  More generally, if $\XX
= \TpM$ with apex~$\OO$, and $q$ lies on the geodesic~$\OO z$
from~$\OO$ to~$z$ in~$\XX$, then two nonzero vectors $V_z \in T_z\XX$
and $V_q \in T_q\XX$ are \emph{parallel} if there exist $\ve_z > 0$
and $\ve_q > 0$ such that $\ve_z V_z$ and $\ve_q V_q$ are parallel.
\end{defn}

\begin{lemma}\label{l:uniqueness-parallel}
Let~$\MM$ be $\CAT(\kappa)$ and $\XX = \TpM$ with apex~$\OO$.  Fix $z
\in \XX$ and $q \in \OO z$.  Any nonzero vector $V_z\in T_z\XX$ has a
unique unit vector $V_q \in T_q\XX$ parallel~to~$V_z$.
\end{lemma}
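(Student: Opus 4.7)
The plan is to exploit the flat-triangle property of the cone $\XX = \TpM$ from Lemma~\ref{l:flat-triangle}: together with the radial direction $\log_z \OO$, the vector $V_z$ generates a flat Euclidean sector inside $\XX$, and both existence and uniqueness of a parallel unit vector at $q$ reduce to the existence and uniqueness of the Euclidean-parallel direction through $q$ in that sector.

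For existence, I first use local geodesic extendability: because $\XX$ is a $\CAT(0)$ length space (Propositions~\ref{p:tangent-cone-is-NPC} and~\ref{p:TpM_is_NPC}), I may choose $\ve_z > 0$ small enough that $v_z := \exp_z(\ve_z V_z)$ is defined. By Lemma~\ref{l:flat-triangle} the triangle $\OO z v_z$ is isometric to a Euclidean triangle in $\RR^2$. Inside that flat triangle there is a unique $v_q \in \OO v_z$ such that $q v_q$ is Euclidean-parallel to $z v_z$, exactly as in Remark~\ref{r:parallel}. Set $\ve_q = \|\log_q v_q\|$ and $V_q = \log_q v_q / \ve_q$. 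Then $\ve_z V_z = \log_z v_z$ and $\ve_q V_q = \log_q v_q$ satisfy the primary parallelism of Remark~\ref{r:parallel}, so $V_q$ is a unit vector in $T_q\XX$ parallel to $V_z$ in the sense of Definition~\ref{d:parallel-vectors}.

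For uniqueness, suppose $V_q'$ is another unit vector in $T_q\XX$ parallel to $V_z$, witnessed by scalars $\ve_z', \ve_q' > 0$ and points $v_z' = \exp_z(\ve_z' V_z)$, $v_q' \in \OO v_z'$ with $\ve_q' V_q' = \log_q v_q'$. Both $v_z$ and $v_z'$ lie on the single geodesic ray from $z$ in direction $V_z$, so after relabeling we may assume $\ve_z \le \ve_z'$, in which case $v_z$ lies on the segment $z v_z'$ and the flat triangle $\OO z v_z$ embeds isometrically into the flat triangle $\OO z v_z'$ by Lemma~\ref{l:flat-triangle}. Inside this common planar Euclidean region, Euclidean parallelism is unambiguous: the unique line through $q$ parallel to the line containing $z v_z$ and $z v_z'$ meets $\OO v_z$ at $v_q$ and $\OO v_z'$ at $v_q'$, so $\log_q v_q$ and $\log_q v_q'$ are positive scalar multiples of one another, and their unit normalizations agree, giving $V_q = V_q'$.

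The main obstacle I anticipate is handling the degenerate configurations in which $V_z$ points along the line through $\OO$ and $z$: the ``triangle'' $\OO z v_z$ then collapses to a segment, and the Euclidean-parallelism construction must be read in the limiting one-dimensional case. In those configurations the parallel unit vector at $q$ is simply the inward or outward radial unit vector at $q$, and the uniqueness argument still applies with the planar sector replaced by the collinear one. A secondary care point is checking that the smaller flat triangle genuinely sits isometrically inside the larger one in the uniqueness step; this follows by applying Lemma~\ref{l:flat-triangle} to the larger triangle and restricting, since the geodesic $\OO v_z$ lies in the flat region spanned by $\OO z$ and $\OO v_z'$ whenever $v_z \in z v_z'$.
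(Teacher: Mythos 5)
Your proof is correct and follows essentially the same route as the paper, whose entire argument is the one-line observation that the point $v_q$ in Remark~\ref{r:parallel} is unique inside the flat triangle $\triangle \OO z v_z$ guaranteed by Lemma~\ref{l:flat-triangle}. You additionally spell out two details the paper leaves implicit---independence of the resulting direction from the choice of scaling $\ve_z$ (via nesting of the flat triangles) and the degenerate collinear case---both of which are handled correctly.
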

\begin{proof}
The point $v_q$ in Remark~\ref{r:parallel} is unique.
\end{proof}


\begin{defn}[Radial transport]\label{d:radial-transport}
Let~$\MM$ be $\CAT(\kappa)$ and $\XX = \TpM$ with apex~$\OO$.  Fix $z
\in \XX$ and $\OO \neq q \in \OO z$.  For any unit vector $V_z\in
T_z\XX$ let $V_q$ be the unit vector in $T_q \XX$ parallel to~$V_z$
afforded by Lemma~\ref{l:uniqueness-parallel}.  The \emph{radial
transport} from~$q$~to~$z$~is
\begin{align*}
  \pp_{q \to z}: T_{q}\XX & \to T_z\XX
\\
                     tV_q &\mapsto tV_z \text{ for all } t\geq 0.
\end{align*}
The inverse of $\pp_{q \to z}$ is the radial transport $\pp_{z \to q}
= \pp_{q \to z}^{-1}$ (justified by
Proposition~\ref{p:radial-is-isometry}).
\end{defn}

\begin{prop}\label{p:radial-is-isometry}
Radial transport is an isometry: in
Definition~\ref{d:radial-transport}, $W_{\hspace{-1pt}z}
\hspace{-1pt}=\hspace{-1pt} \pp_{q \to z}(W_q) \hspace{-1pt}\implies$
$$
  \angle(V_z,W_z) = \angle(V_q,W_q).
$$
\end{prop}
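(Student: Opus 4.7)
The plan is to realize the radial transport $\pp_{q\to z}$ as the action on tangent vectors induced by the cone dilation of $\XX = \TpM$ centered at the apex $\OO$. Set $\lambda = \|z\|/\|q\| \geq 1$ and let $\sigma_\lambda: \XX \to \XX$ be the dilation sending $sU \mapsto (\lambda s)U$ for $U \in \SpM$ and $s \geq 0$, so that $\sigma_\lambda(q) = z$. The conical metric formula (Definition \ref{d:conical-metric}) yields $\dd_p(\sigma_\lambda x, \sigma_\lambda y) = \lambda\, \dd_p(x,y)$ for all $x,y \in \XX$, so $\sigma_\lambda$ is a homothety; it maps geodesics to geodesics and multiplies lengths by~$\lambda$.

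Given unit vectors $V_q, W_q \in T_q\XX$, choose $\ve > 0$ small enough that $v_q := \exp_q(\ve V_q)$ and $w_q := \exp_q(\ve W_q)$ are defined, and let $\gamma_V(s) = \exp_q(sV_q)$ and $\gamma_W(t) = \exp_q(tW_q)$ for $s,t \in [0,\ve)$. The homothety turns these into $\tilde\gamma_V(u) := \sigma_\lambda(\gamma_V(u/\lambda))$ and $\tilde\gamma_W(u') := \sigma_\lambda(\gamma_W(u'/\lambda))$, which are unit-speed geodesics issuing from $z$. The key identification is that the initial tangent of $\tilde\gamma_V$ at $z$ equals $V_z := \pp_{q \to z}(V_q)$ (and likewise for $W$). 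For this, apply Lemma \ref{l:flat-triangle} to the triangle $\triangle(\OO, q, v_q)$: it is flat, and in its Euclidean model $\sigma_\lambda$ is ordinary dilation about $\OO$, sending the segment $qv_q$ to a segment based at $z$ that is parallel (in the Euclidean sense of the flat triangle) to $qv_q$. This is precisely the segment whose direction at $z$ defines $V_z$ via Remark \ref{r:parallel} and Definition \ref{d:parallel-vectors}.

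Granted that identification, the angle equality is a short calculation. Write $\cos\angle(V_z,W_z)$ using the limit in Definition \ref{d:angle} applied to $\tilde\gamma_V, \tilde\gamma_W$, substitute $\dd_p(\tilde\gamma_V(u), \tilde\gamma_W(u')) = \lambda\, \dd_p(\gamma_V(u/\lambda), \gamma_W(u'/\lambda))$, and change variables $s = u/\lambda$, $t = u'/\lambda$. Every term in the difference quotient acquires a factor of $\lambda^2$, so these cancel between numerator and denominator and the limit collapses onto $\cos\angle(V_q,W_q)$ via the same Definition \ref{d:angle} applied at $q$.

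The only substantive step is the identification in the second paragraph; the first paragraph is a direct consequence of the conical metric formula, and the third is pure bookkeeping inside the angle limit. That identification hinges entirely on Lemma \ref{l:flat-triangle}: flatness of $\triangle(\OO, q, v_q)$ is what lets us perform Euclidean parallelism reasoning inside $\XX$, which is the very content of the definition of parallel vectors.
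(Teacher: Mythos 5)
Your proof is correct, and it reaches the conclusion by a genuinely different packaging of the same underlying geometry. The paper fixes a small scale $s$, builds the six flat triangles $\triangle\OO p v_p$, $\triangle\OO p w_p$, $\triangle\OO v_p w_p$ for $p\in\{q,z\}$, deduces $v_qw_q\parallel v_zw_z$ with ratio $|\OO q|/|\OO z|$, concludes that $\triangle qv_qw_q$ and $\triangle zv_zw_z$ are similar Euclidean triangles, and lets $s\to0$. You instead observe that the cone dilation $\sigma_\lambda$ is a global homothety of $(\XX,\dd_p)$ --- immediate from the conical metric formula, and the same homogeneity the paper itself invokes in Proposition~\ref{p:TpM_is_NPC} --- hence angle-preserving, and then identify its induced action on tangent directions at $q$ with $\pp_{q\to z}$ using a single flat triangle $\triangle(\OO,q,v_q)$; this is consistent with Remark~\ref{r:parallel}, since in the Euclidean model the point $v_z$ of that remark is exactly $\sigma_\lambda(v_q)$. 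What your route buys is conceptual clarity (radial transport is the derivative of the cone's scaling symmetry) and the elimination of the parallelism-and-ratio bookkeeping for $v_qw_q$ versus $v_zw_z$; as a byproduct it also shows $\pp_{q\to z}$ hits every direction at $q$. What the paper's route handles more explicitly is the degenerate configuration where $v_p$ or $w_p$ lands on the geodesic through $\OO$ and $z$ (so that $\angle(\log_\OO q,\log_\OO v_q)=\pi$ and Lemma~\ref{l:flat-triangle} does not literally apply to that triangle); your argument still covers this case because $\sigma_\lambda$ preserves the ray $\OO z$ and radial directions, but you should say so rather than rely on flatness of a triangle that may be degenerate.
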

\begin{proof}
Write $|xy|$ for the length of the geodesic segment from $x$ to~$y$
when the space containing $x$ and~$y$ is clear from context.  Given $s
\in (0,1)$ such that $s < |\OO q|$, write
\begin{equation}\label{eq:radial-is-isometry:1}
  v_p = \exp_p \bigl({\textstyle\frac{|\OO p|}{|\OO z|}}s V_p\bigr)
  \ \text{ and }\
  w_p = \exp_p \bigl({\textstyle\frac{|\OO p|}{|\OO z|}}s W_p\bigr)
\end{equation}
for $p \in \{q,z\}$; see Figure~\ref{f:radial}.
\begin{figure*}[!ht]
\centering
\includegraphics[width=0.25\textwidth]{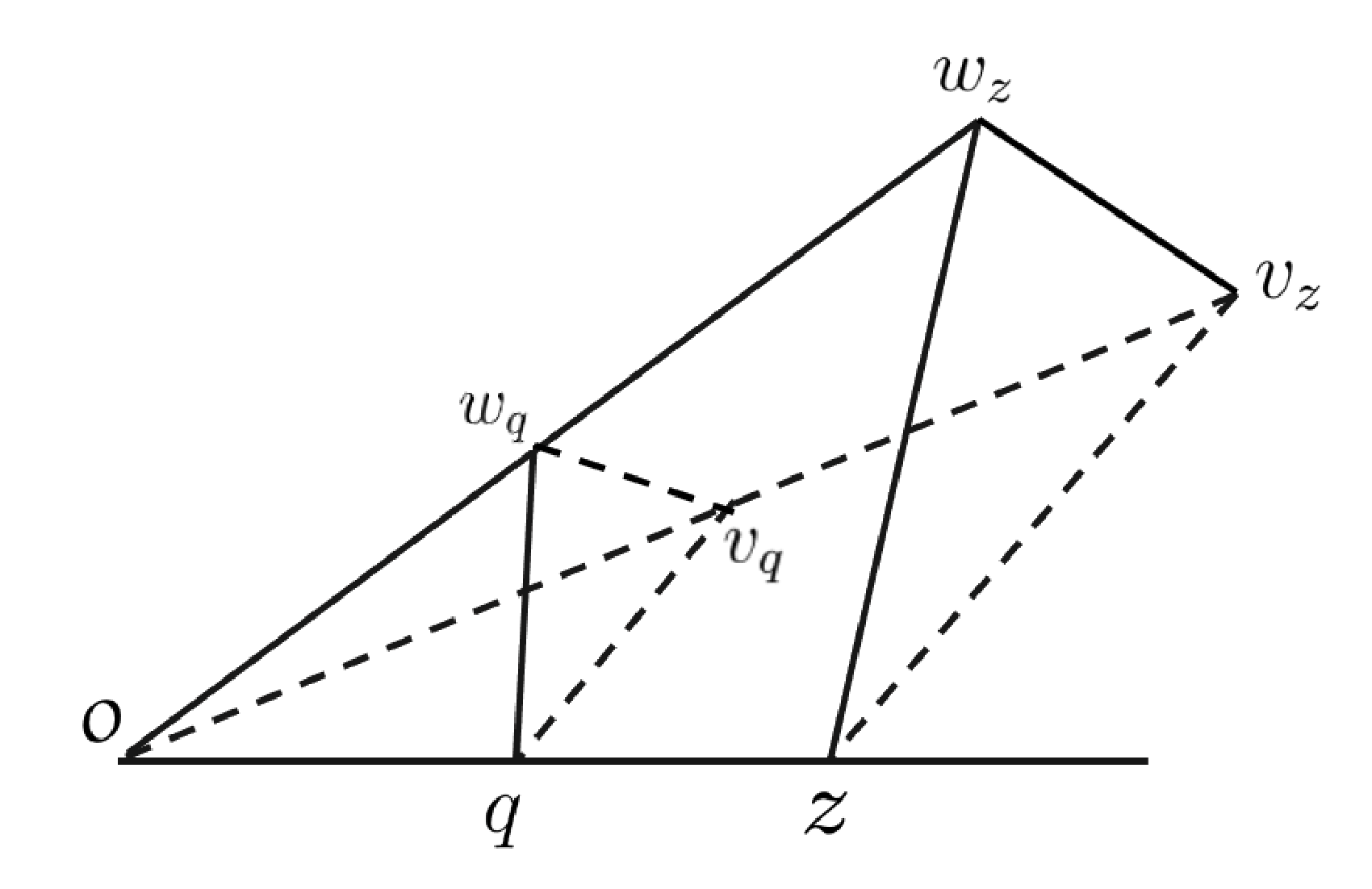}
\caption{The same setup as in Figure~\ref{f:parallel} but with two
flat sectors.  Here $v_q w_q \parallel v_z w_z$.}
\label{f:radial}
\end{figure*}
If any of the points $w_p$ or $v_p$ for $p \in \{q,z\}$ lie in the
geodesic through $\OO$ and~$z$, the condition $s < |\OO q|$ ensures
that those points lie in the ray~$\{\exp_\OO tz\}_{t \geq 0}$, so
radial transport applies.

The six triangles
$\triangle\OO p v_p$, $\triangle\OO p w_p$, and $\triangle\OO v_p w_p$
for $p \in \{q,z\}$ are flat by Lemma~\ref{l:flat-triangle}.  Since
$q v_q \parallel z v_z$ and $q w_q \parallel z w_z$, with both length
ratios being $\frac{|\OO q|}{|\OO z|}$ by construction,
$$
  \frac{|\OO v_q|}{|\OO v_z|}
  =
  \frac{|\OO w_q|}{|\OO w_z|}
  =
  \frac{|\OO q|}{|\OO z|}.
$$
As $\triangle \OO v_z w_z$ is also flat, it follows that $v_q w_q
\parallel v_z w_z$ and that
$$
  \frac{|v_q w_q|}{|v_z w_z|}
  =
  \frac{|\OO q|}{|\OO z|}.
$$
Hence the two triangles $\triangle q v_q w_q$ and $\triangle z v_z
w_z$ are similar, so $\angle(q v_q, q w_q) = \angle(z v_z, z w_z)$
independent of~$s$ in~\eqref{eq:radial-is-isometry:1}.  Letting
that~$s$ converge to~$0$ leads to the conclusion that $\angle(V_q,
W_q) = \angle(V_z, W_z)$, as desired.
\end{proof}

\begin{remark}
Proposition~\ref{p:radial-is-isometry} and the definition of radial
transport here differ from \cite[Theorem~2.11]{MBH15}.  In particular,
results stated in \cite[Theorem~2.11]{MBH15} rely on the definition,
at the beginning of page~4 in that paper, that parallel rays
$\gamma_1(t)$ and~$\gamma_2(t)$ have constant, uniform distance; that
is
$$
  \dd\bigl(\gamma_1(t),\gamma_2(t)\bigr) = C
$$
for some constant~$C$.  From this definition, \cite[Lemma~2.9]{MBH15}
claims that given a ray~$\gamma_1$ and a point~$p$ there always exists
a ray $\gamma_2$ starting from $p$ that is parallel to $\gamma_2$.
However, a counterexample to this claim is depicted in
Figure~\ref{f:no-ray}.
\end{remark}
\begin{figure*}[!ht]
\centering
\vspace{-2ex}
\includegraphics[width=0.45\textwidth]{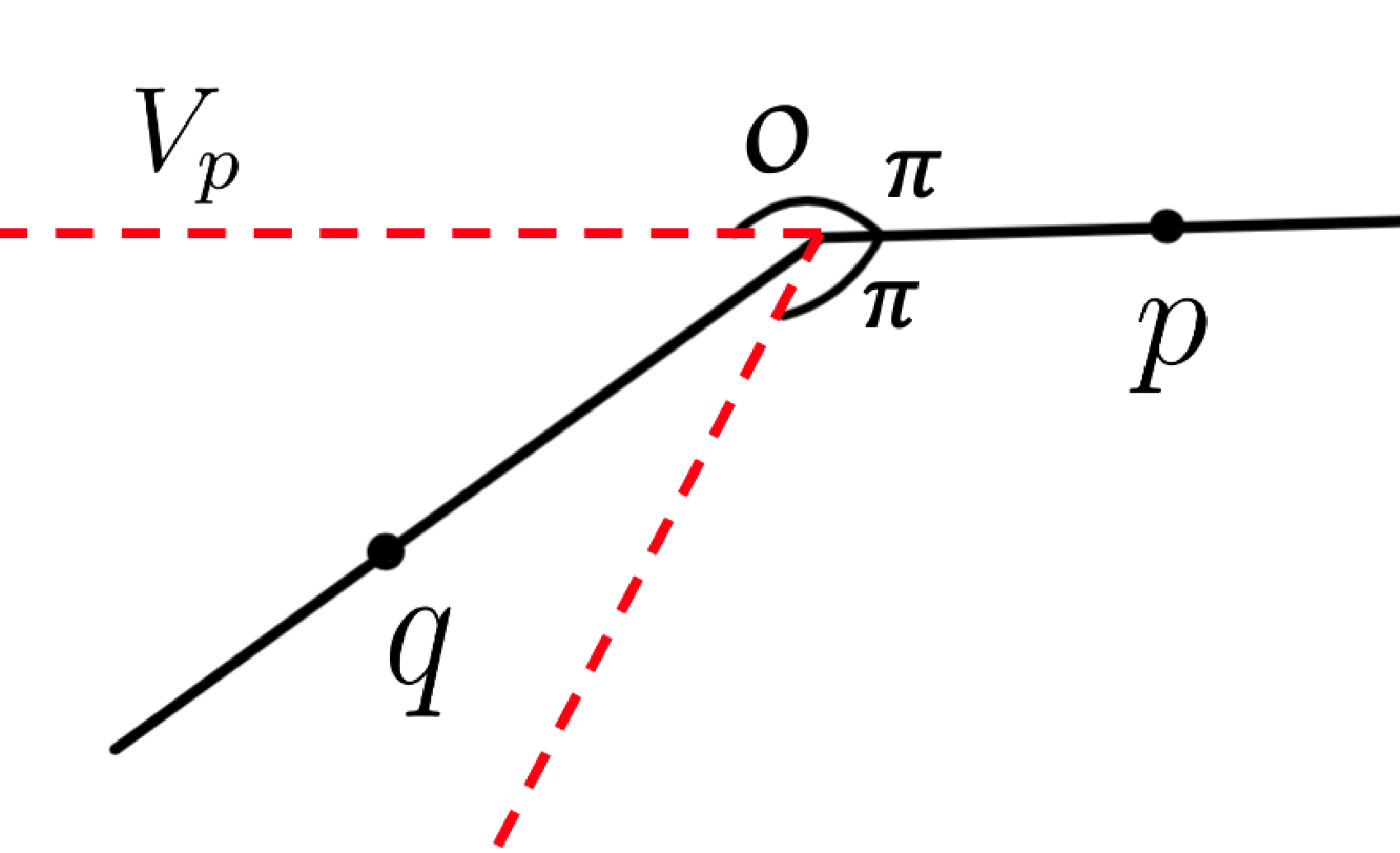}
\caption{On a topological plane with isolated singularity of total
angle~$3\pi$, the red dashes are the boundary of the shadow of~$\OO
p$.  If~$q$ is in the shadow and $V_p$ is the unit vector in the
direction~$p\OO$ then there is no ray that starts from~$q$ and is
parallel to~$V_p$ as defined in \cite{MBH15}.}
\label{f:no-ray}
\end{figure*}

Since the purpose of radial transport is to collapse by d\'evissage,
the next goal is to extend radial transport to the case where the
initial point is the apex~$\OO$.

\begin{remark}\label{r:parallel-at-cone-point}
Let~$\MM$ be $\CAT(\kappa)$ and $\XX = \TpM$ with apex~$\OO$.  Fix $z
\in \XX$.  Suppose that $V \in T_\OO\XX$ is tangent to the cone point.
The ray $\exp_\OO tV$ is well defined for $t\in [0,\infty)$ since
$\OO$~is the apex.  Lemma~\ref{l:existence-parallel-ray:BBI} produces
a unique ray $\gamma\bigl([0,\infty)\bigr)$ starting at~$z$ and
parallel to~$\exp_\OO tV$.  As~$\gamma\bigl([0,\infty)\bigr)$ is a
geodesic, it has a unit tangent $V_z \in T_z\XX$ that is thought of as
parallel to~$V$.  The precise definition follows.
\end{remark}

\begin{defn}[Parallel vectors, one at~$\OO$]\label{d:parallel-at-cone-point}
Let~$\MM$ be $\CAT(\kappa)$ and $\XX = \TpM$ with apex~$\OO$.  Fix $z
\in \XX$.  For any unit vector $V \in T_\OO\XX$, let $V_z \in T_z\XX$
be the unique unit vector such that the ray $\exp_\OO tV$ is parallel
to the ray $\exp_z tV_z$.  The vectors $tV$ and $tV_z$ are
\emph{parallel}, written $tV \parallel tV_z$, for all $t \in
[0,\infty)$.
\end{defn}

\begin{defn}[Radial transport from~$\OO$]\label{d:radial-transport-from-OO}
Let~$\MM$ be $\CAT(\kappa)$ and $\XX = \TpM$ with apex~$\OO$.  Fix $z
\in \XX$.  The \emph{radial transport} from~$\OO$ to~$z$ is the map
\begin{align*}
  \pOz: T_\OO\XX & \to T_z\XX
\\
              tV &\mapsto tV_z \text{ for all } t \geq 0
\end{align*}
in which $tV \parallel tV_z$ as in
Definition~\ref{d:parallel-at-cone-point}.
\end{defn}
\begin{figure*}[!ht]
\centering
  \includegraphics[width=0.35\textwidth]{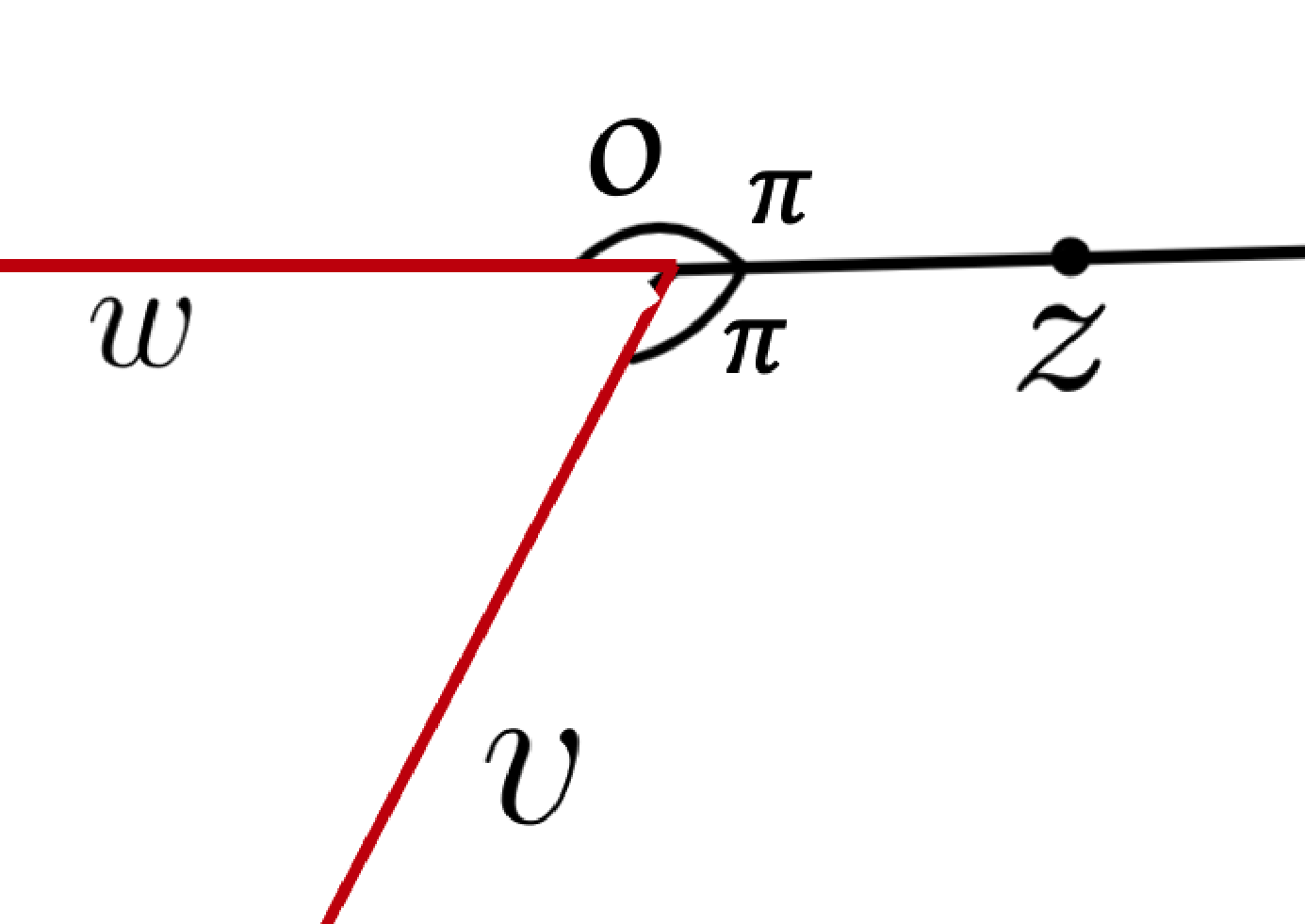}
\caption{Radial transport from $\OO$ to~$z$ is not isometric.  Both
$v$ and~$w$ are mapped to a vector in the direction from~$z$ to~$\OO$.}
\label{f:transport}
\end{figure*}

\begin{remark}\label{r:z-to-OO}
Transport from $T_z\XX$ to~$T_\OO\XX$ might not be possible because
rays starting from~$z$ may not extend indefinitely.  This failure is
related to Remark~\ref{r:radial-not-isom}.
\end{remark}

\begin{remark}\label{r:radial-not-isom}
Unlike radial transport between two non-apex points in
Definition~\ref{d:radial-transport}, radial transport $\pOz$ from the
apex~$\OO$ to a less singular point~$z$ need not be an isometry.  For
example, if $V,W\in T_\OO\XX$ have $\angle(V,\log_\OO z) =
\angle(W,\log_\OO z) = \pi$ (so $V$ and~$W$ lie in the shadow of~$z$;
see Figure~\ref{f:transport} and Definition~\ref{d:shadow}), then
$\pOz V = \pOz W$.
\end{remark}

Remark~\ref{r:radial-not-isom} notwithstanding, not all is lost where
the isometry is concerned; see
Proposition~\ref{p:almost-iso-of-singular-radial-transport}.  To explore how
much isometry survives, another notion is required.

\begin{defn}[Half-strip]\label{d:half-strip}
Let~$\MM$ be $\CAT(\kappa)$ and $\XX = \TpM$ with apex~$\OO$.
Parallel rays $\gamma_1, \gamma_2 \subseteq \XX$ \emph{span a convex
flat half-strip} if there are parallel rays $\zeta_1,\zeta_2$ in
$\RR^2$ and an isometry $\iota$ that maps the convex hull
of~$\gamma_1$ and~$\gamma_2$ in~$\XX$ to the convex hull of~$\zeta_1$
and~$\zeta_2$ in~$\RR^2$ satisfying $\zeta_1(t) =
\iota\bigl(\gamma_1(t)\bigr)$ and $\zeta_2(t) =
\iota\bigl(\gamma_2(t)\bigr)$ for all $t \in [0,\infty)$.
\end{defn}

The exponentials in the following result exist because they occur in a
conical space.

\begin{prop}\label{p:convex-half-strip}
Let~$\MM$ be $\CAT(\kappa)$ and $\XX = \TpM$ with apex~$\OO$.  Fix $z
\in \XX$ and a unit vector $V \in T_\OO\XX$.  Set $V_z = \pOz V \in
T_z\XX$.  Then $\exp_\OO tV$ and $\exp_z(tV_z)$ for $t \in [0,\infty)$
span a convex flat half-strip (whose width can be~$0$).
\end{prop}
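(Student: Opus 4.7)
The plan is to construct the flat half-strip explicitly as the union of the flat triangles furnished by Lemma~\ref{l:flat-triangle}, and then identify $\gamma_2$ with the Euclidean parallel ray inside this union via the uniqueness of parallel rays (Lemma~\ref{l:existence-parallel-ray:BBI}). Set $r = \norm{\log_\OO z}$ and $\alpha = \angle(V, \log_\OO z)$, write $\gamma_1(t) = \exp_\OO(tV)$ and $\gamma_2(t) = \exp_z(tV_z)$, and for each $t \geq 0$ let $T_t = \triangle\, \OO\, z\, \gamma_1(t)$, which is flat by Lemma~\ref{l:flat-triangle}. Because $\gamma_1(t_1)$ lies on the segment $\OO\gamma_1(t_2)$ for $t_1 \leq t_2$, these triangles are nested, $T_{t_1} \subseteq T_{t_2}$, and my candidate for the convex flat half-strip is $H = \bigcup_{t\geq 0} T_t \subseteq \XX$.

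First I would identify $H$ with its Euclidean model $H' \subseteq \RR^2$, namely the nested union of comparison triangles having vertices $\OO' = (0,0)$, $z' = (r, 0)$, and $\gamma_1'(t) = t(\cos\alpha, \sin\alpha)$. Given $w_1, w_2 \in H$, both lie in some $T_{t_0}$; flatness of $T_{t_0}$ realizes the Euclidean segment between their images as a path in $\XX$ of the same length, and uniqueness of $\CAT(0)$ geodesics forces that path to be the $\XX$-geodesic. So the natural bijection $\phi: H \to H'$ sending $\gamma_1(t) \mapsto \gamma_1'(t)$ and fixing $\OO$ and $z$ is an isometry. A direct coordinate computation identifies $H'$ with the convex hull in~$\RR^2$ of the parallel rays $\zeta_1 = \gamma_1'$ and $\zeta_2(t) = (r + t\cos\alpha,\, t\sin\alpha)$, a flat half-strip of perpendicular width $r\sin\alpha$ (degenerating to a segment or ray when $\alpha \in \{0, \pi\}$ or $z = \OO$).

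Next I would check that $\gamma_2 \subseteq H$. Under $\phi^{-1}$ the ray $\zeta_2$ pulls back to a ray $\gamma_2^{\mathrm{flat}} \subseteq H$ starting at $z$, with constant distance $r$ from $\gamma_1$ and hence parallel to $\gamma_1$ in the $\CAT(0)$ sense of Definition~\ref{d:parallel-lines}. The uniqueness statement of Lemma~\ref{l:existence-parallel-ray:BBI} then forces $\gamma_2^{\mathrm{flat}} = \gamma_2$, so both $\gamma_1$ and $\gamma_2$ sit inside~$H$. Because $H$ is convex in $\XX$ (any $\XX$-geodesic between two points of~$H$ lies in some $T_{t_0}$ by the same argument used above), the convex hull of $\gamma_1 \cup \gamma_2$ in~$\XX$ is contained in~$H$; conversely $\phi$ carries $H$ onto the convex hull of $\zeta_1 \cup \zeta_2$ in $\RR^2$, so equality holds and $\phi$ restricts to the isometry $\iota$ required by Definition~\ref{d:half-strip}.

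The main obstacle I anticipate is precisely step two: confirming that the pointwise-flat triangles $T_t$ glue coherently into a single Euclidean region, i.e.\ that $\phi: H \to H'$ is a genuine isometry and not merely a triangle-by-triangle flat map. This hinges on exploiting the nesting $T_{t_1} \subseteq T_{t_2}$ to realize any two points of $H$ inside a common flat triangle, together with $\CAT(0)$ uniqueness of geodesics from Definition~\ref{d:CATk} to identify the Euclidean segment there with the $\XX$-geodesic; once this is done, the remaining identifications are essentially two-dimensional Euclidean bookkeeping.
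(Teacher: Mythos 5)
Your construction is sound in outline and is essentially a mirror image of the paper's proof: the paper takes the cone from $\OO$ over the ray $\exp_z(tV_z)$ --- flat by Lemma~\ref{l:flat-triangle} --- and recovers $\exp_\OO tV$ as the limit of the geodesics from $\OO$ to $\exp_z(tV_z)$ as $t\to\infty$, citing the \emph{proof} of \cite[Proposition~9.2.28]{BBI01} for that limit identity; you instead cone over the ray $\exp_\OO tV$ together with the segment $\OO z$ and recover $\exp_z(tV_z)$ as the remaining boundary ray, identifying it through the uniqueness half of Lemma~\ref{l:existence-parallel-ray:BBI} and Definition~\ref{d:parallel-at-cone-point}. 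Your route has the advantage of using only the statement of the BBI proposition rather than its proof, at the cost of more explicit bookkeeping with the nested triangles.

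One step is asserted incorrectly and needs repair. The union $H=\bigcup_{t\ge 0}T_t$ does \emph{not} contain the ray $\gamma_2$ except for its basepoint, and correspondingly $H'$ is \emph{not} the convex hull of $\zeta_1\cup\zeta_2$: for $s>0$ the point $\zeta_2(s)=(r+s\cos\alpha,\,s\sin\alpha)$ lies strictly on the far side of the edge $z'\gamma_1'(t)$ from the origin for every $t$, so $\zeta_2\setminus\{z'\}$ misses every comparison triangle and hence misses~$H'$. Consequently ``$\zeta_2$ pulls back to a ray $\gamma_2^{\mathrm{flat}}\subseteq H$'' and the claimed equality of $H$ with $\hull(\gamma_1\cup\gamma_2)$ are both false as written; the ray $\zeta_2$ lies only in the closure of~$H'$, and the convex hull is $H\cup\gamma_2$. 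The repair is routine but must be made explicit: extend the distance-preserving map $\phi^{-1}$ from $H'$ to its completion (the limits exist in $\XX$ under the completeness that Lemma~\ref{l:existence-parallel-ray:BBI} already presupposes), observe that the extension remains distance-preserving and carries $\zeta_2$ to a genuine geodesic ray of $\XX$ starting at~$z$ at constant distance $r$ from $\gamma_1$, and only then invoke uniqueness of parallels to conclude that this boundary ray is $\exp_z(tV_z)$. With that adjustment the argument goes through.
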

\begin{proof}
It follows from the proof of \cite[Proposition~9.2.28]{BBI01} that the
ray $\exp_\OO tV$ is the limit of the geodesic from $\OO$ to $\exp_z(tV_z)$
as $t$ goes to infinity.  In other words,
$$
  V
  =
  \lim_{t\to\infty}\frac{\log_\OO\bigl(\exp_z(tV_z)\bigr)}
                        {\bigl\|\log_\OO\bigl(\exp_z(tV_z)\bigr)\bigr\|}.
$$
On the other hand, invoke Lemma~\ref{l:flat-triangle} again to see
that the convex hull of~$\OO$ and the ray $\exp_z(tV_z)$ is flat.
Hence the convex hull of $\exp_\OO tV$ and $\exp_z(tV_z)$ is also
flat.
\end{proof}

\section{Limit tangent spaces and limit logarithm maps}\label{s:limit}

\noindent
Radial transport compares tangent cones as they approach the
apex~$\OO$ along a geodesic.  Proposition~\ref{p:radial-is-isometry}
says that those tangent cones are isometric via radial transport.  It
is therefore natural to identify them in the limit, as follows.

\begin{defn}[Limit tangent cone]\label{d:limit-tangent-cone}
Let~$\MM$ be $\CAT(\kappa)$ and $\XX = \TpM$ with apex~$\OO$.  Fix $Z
\in T_\OO\XX$.  For $q, q' \in \OO z$ between $\OO$ and~$z = \exp_\OO
Z$, radial transport $\pp_{q \to q'}$ identifies $T_q\XX$ with
$T_{q'}\XX$.  The \emph{limit tangent cone} along~$Z$ is the
direct limit
$$
  \TZX = \varinjlim_{q \in \OO z} T_q\XX.
$$
Write $\vS_Z\XX$ for the unit sphere around the apex in~$\TZX$.
\end{defn}

\begin{remark}\label{r:dirlim}
The direct limit here is an algebraic or categorical notion rather
than an analytic one; see \cite[Chapter~III.10]{lang2002}.  An element
of $\TZX$ is represented by a tangent vector in~$T_q\XX$ at a point
beteween~$\OO$ and~$z$, and two such vectors---at different
points---represent the same limit tangent element if they are parallel
transports of each other.  The direct limit allows radial transport
from the apex (Definition~\ref{d:radial-transport-from-OO}) to be
viewed as comparing tangent data at~$\OO$ to tangent data
infinitesimally near~$\OO$ along the given direction~$Z$.  In
applications to smoothly stratified spaces~$\MM$
\cite{tangential-collapse}, the limit tangent space~$\TZX$ is
automatically less singular than $\XX = \TpM$ itself, in the precise
sense that the codimension of the singularity decreases upon taking
limit tangent spaces; see
\cite[Proposition~4.18.2]{tangential-collapse}.
Indeed, this is a key motivation for defining limit tangent spaces.
\end{remark}

\begin{remark}\label{r:orthant-space-limit-log}
Barden and Le define \emph{boundary limits} of \emph{translated
logarithm maps} in orthant spaces \cite[Theorem~2]{barden-le2018},
which accomplish what limit log maps do here.  Translation can
substitute in orthant spaces for the more general but weaker radial
transport
because orthant spaces are glued from pieces of Euclidean spaces
\cite{centroids}.
\end{remark}

The exponentials in the following result exist because they occur in a
conical space.

\begin{defn}[Limit log map]\label{d:limit-log}
Let~$\MM$ be $\CAT(\kappa)$ and $\XX = \TpM$ with apex~$\OO$.  Fix $Z
\in T_\OO\XX$.  For any $V \in T_\OO\XX$ and any $q \in \OO z$ between
$\OO$ and~$z = \exp_\OO Z$, write $V_q = \pp_{\OO\to q} V$.  Let $V_Z$
be the image of~$V_q$ (for any $q \neq \OO$) in the limit tangent
space~$\TZX$.  The \emph{limit log map} along $Z$~is
\begin{align*}
  \LL_Z: T_\OO\XX & \to \TZX
\\
             tV &\mapsto tV_Z \text{ for all } t \geq 0.
\end{align*}
\end{defn}

\begin{remark}\label{r:limit-log}
The usual log map at a point $p$ in a $\CAT(\kappa)$ space~$\MM$
takes each point $p \in \MM$ to the tangent at~$p$ of the geodesic
aimed at the terminus~$p$ as it exits the initial point~$p$.  In
contrast, the limit log map at~$p$ in the direction~$Z$ reflects
what happens when each point~$p$ goes to the tangent of the geodesic
aimed at the terminus~$p$ as it exits an initial point~$p'_Z$ that
is infinitesimally near~$p$ along~$Z$.  The limit log map is more
accurately the derivative of this mapping, in that it takes the
tangent vector pointing from~$p$ toward~$p$ to a tangent vector
at~$p'_Z$.  This information is recorded at~$p$ itself, rather
than at~$p'_Z$, via the direct limit in
Definition~\ref{d:limit-tangent-cone}.
\end{remark}

\begin{remark}\label{r:folding-map}
The limit log map was called the \emph{folding map} on a hyperbolic
topological plane with isolated singularity \cite{kale-2015} or on an
open book \cite{hotz-et-al.2013} because the limit log map along a
direction~$Z$ collapses rays whose directions are ``beyond opposite''
to~$Z$.  The general version is made precise in the next Definition
and Remark.
\end{remark}

\begin{defn}[Shadow]\label{d:shadow}
Let~$\MM$ be $\CAT(\kappa)$ and $\XX = \TpM$ with apex~$\OO$.  The
\emph{shadow} of a tangent vector $Z \in T_\OO\XX$ is the set $\IZ$ of
nonzero vectors that form an angle of~$\pi$ with~$Z$:
$$
  \IZ = \{V \in T_\OO\XX \mid \angle(V,Z) = \pi\}.
$$
\end{defn}

\begin{remark}\label{r:parallel-in-shadow}
When $V$ lies in the shadow of~$Z$, the ray through~$Z$ parallel
to~$\exp_\OO tV$ passes through~$\OO$ itself.  Hence, by
Definition~\ref{d:limit-log} (see also
Definitions~\ref{d:parallel-at-cone-point}
and~\ref{d:radial-transport-from-OO}), $\LL_Z(V)$ is a scalar multiple
of the vector~$\log_Z\OO$ that points from~$Z$ directly toward~$\OO$.
This vector arises numerous times and can be written in various ways,
such as
$$
  \log_Z\OO = -\LL_Z(Z) = -\pOz(Z)
$$
for $z = \exp_\OO Z$, all expressing that this vector is the unit
tangent at~$Z$ to the ray $Z \OO$ traversed backward along the
ray~$\OO Z$.  The scalar in question is
$\|\LL_Z(V)\|/\|\mathord{-}\LL_Z(Z)\|$.  In particular, if a sequence
of vectors in~$T_\OO\XX$ converges to a vector in the shadow of~$Z$,
then the sequence of images under the limit log map converges to this
same vector:
$$
  \frac{\LL_Z V}{\|\LL_Z V\|}
  \to
  \frac{-\LL_Z Z}{\|\LL_Z Z\|}
  \ \text{ as }\ V \to \IZ.
$$
The next result generalizes this observation that after taking the
limit log map along~$Z$, the shadow $\IZ$ becomes the exact
opposite vector of~$Z$.
\end{remark}

\begin{prop}\label{p:sum=pi}
Let~$\MM$ be $\CAT(\kappa)$ and $\XX = \TpM$ with apex~$\OO$.  Fix~$Z
\in\nolinebreak\! T_\OO\XX$.  For any $W_Z \in \TZX$,
$$
  \angle\bigl(\LL_Z(Z), W_Z\bigr) + \angle\bigl(W_Z, -\LL_Z(Z)\bigr)
  =
  \angle\bigl(\LL_Z(Z), -\LL_Z(Z)\bigr)
  =
  \pi.
$$
\end{prop}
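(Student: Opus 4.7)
The plan is to reduce the angle identity to elementary planar geometry inside a single flat triangle in $\XX$ with a vertex at the apex $\OO$, and then transport the conclusion back to $\vT_Z\XX$ using the fact that radial transport is an isometry (Proposition~\ref{p:radial-is-isometry}).

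Fix any non-apex $q \in \OO z$ with $z = \exp_\OO Z$, and let $V^+ \in T_q\XX$ be the unit tangent at~$q$ to the segment $\OO z$ pointing away from~$\OO$, with $V^- = -V^+$. By construction $V^+$ is a positive multiple of $\pp_{\OO\to q}(Z)$, and hence an angle-preserving representative of $\LL_Z(Z)$ in $\vT_Z\XX$; similarly, $V^-$ lies in the direction of the representative of $-\LL_Z(Z)$. Since angles in the direct limit depend only on directions of representatives, all of the angles appearing in the claim may be computed in $T_q\XX$ among $V^+$, $V^-$, and a nonzero representative $W_q \in T_q\XX$ of~$W_Z$. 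The second equality is then immediate: $V^+$ and $V^-$ lie on opposite rays of the same geodesic through~$q$, so $\angle(V^+, V^-) = \pi$.

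For the first equality, put $y = \exp_q(tW_q)$ for $t > 0$ small enough that the exponential is defined and $y \neq q$. Apply Lemma~\ref{l:flat-triangle} to the triangle $\triangle \OO z y$ in $\XX$: with $\OO$ as one of its vertices, its convex hull is isometric to a Euclidean triangle, and under this isometry $q$ maps to a point on the side $\OO z$. The sub-triangles $\triangle \OO q y$ and $\triangle q z y$ are inscribed sub-Euclidean triangles of that flat region, so their $\CAT$ angles at~$q$ equal the Euclidean angles in the model, and those Euclidean angles sum to $\pi$ because $q$ lies on the straight segment~$\OO z$. Writing the two $\CAT$ angles as $\angle(\log_q\OO, W_q) = \angle(V^-, W_q)$ and $\angle(\log_q z, W_q) = \angle(V^+, W_q)$ gives $\angle(V^-, W_q) + \angle(V^+, W_q) = \pi$, which is the first equality after pulling back to $\vT_Z\XX$.

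The main technical checkpoint is verifying that the sub-triangle $\triangle q z y$, which does not itself have the apex among its vertices and so is not directly covered by Lemma~\ref{l:flat-triangle}, still has its $\CAT$ angle at~$q$ equal to the Euclidean angle in the flat model. This works because ``flat'' in Lemma~\ref{l:flat-triangle} refers to the full convex hull of the triangle, not just its three sides, so any sub-triangle inscribed in the flat region inherits all Euclidean distances; and the $\CAT$ angle, computed from such distances via Definition~\ref{d:angle}, then coincides with the Euclidean angle.
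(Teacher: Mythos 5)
Your proof is correct and follows essentially the same route as the paper's: both reduce the identity to supplementary Euclidean angles inside a flat region with a vertex at the apex~$\OO$ via Lemma~\ref{l:flat-triangle}, the paper appealing tersely to flatness of the convex hull of $\OO W_Z$ and $\OO z$ while you work at an interior point $q$ of~$\OO z$ with the explicit triangle $\triangle \OO z y$. Your added justification that the inscribed sub-triangle $\triangle q z y$ inherits Euclidean angles from the flat convex hull is a slightly more careful rendering of the same argument, not a different one.
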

\begin{proof}
Set $z = \exp_\OO Z \in \XX$.  By definition of $-\pOz(Z)$
in Remark~\ref{r:parallel-in-shadow},
$$
  \angle(\pOz Z, -\pOz Z) = \pi.
$$
By Lemma~\ref{l:flat-triangle}, the convex hull of $\OO W_Z$ and $\OO
z$ is flat, so
$$
  \angle(\pOz Z, W_Z) + \angle(W_Z, -\pOz Z)
  =
  \angle(\pOz Z, -\pOz Z)
  =
  \pi,
$$
and the desired result follows.
\end{proof}

Although radial transport from $\OO$ to~$z$ is not an isometry, it is
close to being one, in the sense that it retains isometric properties
away from the shadow.

\begin{prop}\label{p:almost-iso-of-singular-radial-transport}
Let~$\MM$ be $\CAT(\kappa)$ and $\XX = \TpM$ with apex~$\OO$.  Fix
unit vectors $V,W,Z \in T_\OO\XX$ with $\angle(V,W) < \pi$ such that
the geodesic $VW$ in~$T_\OO\XX$ does not intersect the shadow~$\IZ$.
Let $z = \exp_\OO Z$ and $U_z = \pOz U$ for $U \in \{V,W\}$.  Then
$$
  \angle(V,W) = \angle(V_z,W_z),
$$
and at the level of geodesics, $V_z W_z = \pOz(VW)$.
\end{prop}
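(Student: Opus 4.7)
The strategy is to factor the apex radial transport $\pOz$ through a non-apex point on the geodesic $\OO z$ and then invoke Proposition~\ref{p:radial-is-isometry}. Fix $q$ in the open segment $\OO z$ and set $V_q = \pp_{\OO \to q} V$ and $W_q = \pp_{\OO \to q} W$. The flat half-strip $H_V$ from Proposition~\ref{p:convex-half-strip} contains three mutually parallel rays $\exp_\OO(tV)$, $\exp_q(tV_q)$, and $\exp_z(tV_z)$, whose starting points $\OO$, $q$, $z$ all lie along $\OO z$. Matching this Euclidean configuration against Definition~\ref{d:radial-transport} (the flat sector with apex $\OO$ containing $\OO z$ and the geodesic $\OO \exp_z(V_z)$ sits inside $H_V$) identifies $V_z = \pp_{q \to z}(V_q)$, and analogously for $W$. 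Hence $\pOz = \pp_{q \to z} \circ \pp_{\OO \to q}$ when restricted to $V$ and $W$.

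Applying Proposition~\ref{p:radial-is-isometry} to the non-apex radial transport $\pp_{q \to z}$ now yields
$$
\angle(V_z, W_z) = \angle(V_q, W_q),
$$
and identifies the geodesic $V_z W_z$ as the $\pp_{q \to z}$-image of the geodesic $V_q W_q$. Both assertions of the proposition therefore reduce to proving $\angle(V_q, W_q) = \angle(V, W)$ for every $q$ in the open segment $\OO z$.

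To establish this remaining equality, I would use the flat fan $F$ at $\OO$ spanned by the angular geodesic from $V$ to $W$. Iterating Lemma~\ref{l:flat-triangle} along this geodesic (which avoids $\IZ$ by hypothesis) shows that $F$ is flat, so $\dd(\exp_\OO(tV), \exp_\OO(tW)) = 2t \sin\bigl(\tfrac{1}{2}\angle(V,W)\bigr)$ for all $t > 0$. The parallelograms inside $H_V$ and $H_W$ give $\dd(\exp_\OO(tV), \exp_q(tV_q)) = \dd(\exp_\OO(tW), \exp_q(tW_q)) = |\OO q|$. Finally, the triangle $\triangle \OO \exp_q(tV_q) \exp_q(tW_q)$ has one vertex at the apex $\OO$ and is therefore flat by Lemma~\ref{l:flat-triangle}, with its sides out of $\OO$ computable inside the half-strips. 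As $t \to \infty$, the asymptotic parallel-ray limit used in the proof of Proposition~\ref{p:convex-half-strip} sends the directions at $\OO$ to $\exp_q(tV_q)$ and $\exp_q(tW_q)$ to $V$ and $W$ respectively, so by continuity of angles at the fixed basepoint $\OO$ (Lemma~\ref{l:inner-product-is-continuous}), the angle at $\OO$ in that flat triangle tends to $\angle(V, W)$. Combined with the constancy of $\angle(V_q, W_q)$ in $q$ noted above, a careful limiting argument forces $\angle(V_q, W_q) = \angle(V, W)$.

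I expect this last step to be the main obstacle: it requires reconciling the differential quantity $\angle(V_q, W_q)$ (defined as a small-parameter limit at $q$) with the macroscopic finite-$t$ Euclidean geometry of the flat triangle at~$\OO$. Once the angle equality is in hand, the geodesic identification $V_z W_z = \pOz(V W)$ follows immediately, because $VW$ lies in $F$, its $\pp_{\OO \to q}$-image is the angular geodesic $V_q W_q$ in $T_q \XX$ via the corresponding flat structure at $q$, and $\pp_{q \to z}$ carries $V_q W_q$ to $V_z W_z$ isometrically by Proposition~\ref{p:radial-is-isometry}.
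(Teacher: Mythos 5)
Your opening reduction is sound but buys nothing: factoring $\pOz = \pp_{q\to z}\circ\pp_{\OO\to q}$ through a non-apex point $q$ and invoking Proposition~\ref{p:radial-is-isometry} merely replaces the claim $\angle(V_z,W_z)=\angle(V,W)$ by the identical claim $\angle(V_q,W_q)=\angle(V,W)$ for another apex transport, so the entire content of the proposition still sits in your final step---which you rightly flag as the obstacle and which, as outlined, cannot be completed. The problem is that every ingredient you assemble (flatness of the fan over $VW$, the width-$|\OO q|$ parallelograms, flatness of $\triangle\,\OO\exp_q(tV_q)\exp_q(tW_q)$, and convergence of the directions at $\OO$ to $V$ and $W$ as $t\to\infty$) holds verbatim when $V$ and $W$ both lie in the shadow $\IZ$ with $0<\angle(V,W)<\pi$, e.g.\ on a cone of total angle $4\pi$; yet there $\pOz V=\pOz W$ by Remark~\ref{r:radial-not-isom}, so the conclusion fails. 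Concretely, your large-$t$ flat-triangle geometry controls only the comparison angle at $q$ in the limit $t\to\infty$, and since comparison angles in a $\CAT(0)$ cone are nondecreasing in the scale while $\angle(V_q,W_q)$ is their infimum as $t\to0$, you obtain only the inequality $\angle(V_q,W_q)\le\angle(V,W)$. The reverse inequality is the actual content, and no argument that never uses the hypothesis that $VW$ misses $\IZ$ at small scales near $q$ can supply it.

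The paper closes exactly this gap by working at a fixed small radius $r$ and moving the basepoint instead of the scale: with $v=\exp_\OO rV$, $v_z=\exp_z rV_z$ (and likewise for $W$), the flat half-strips force $v_z\to v$ and $w_z\to w$ as $z\to\OO$ along $\exp_\OO tZ$, while the hypothesis that $VW$ avoids $\IZ$ guarantees the geodesic $v_zw_z$ never meets the ray $\OO z$, so the similar-flat-triangles configuration from the proof of Proposition~\ref{p:radial-is-isometry} remains valid throughout and keeps $\dd_\OO(v_z,w_z)$ constant; continuity of distance then gives $\dd_\OO(v,w)=\dd_\OO(v_z,w_z)$, and $r\to0$ recovers the angle. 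If you want to salvage your outline, you must replace the $t\to\infty$ limit by an argument of this kind that tracks the chord $v_qw_q$ at small $r$ and invokes the non-crossing hypothesis there; otherwise the ``careful limiting argument'' you appeal to does not exist.
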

\begin{proof}
Given $r \in (0,1]$, write
\begin{align*}
    v = \exp_\OO rV,\ &\ \ \ \,v_z = \exp_zrV_z,
\\  w = \exp_\OO rW,  &\ \ \   w_z = \exp_zrW_z.
\end{align*}
It follows from Proposition~\ref{p:convex-half-strip} that the rays
$\exp_\OO(tV)$ and $\exp_ztV_z$ span a convex flat half-strip.  Hence
$\OO,v,V_z,z$ constitute the vertices of a parallelogram (in $\RR^2$).
Similarly, $\OO,w,W_z,z$ make a parallelogram (see
Figure~\ref{f:prism}).
\begin{figure*}[!ht]
\centering
  \includegraphics[width=0.35\textwidth]{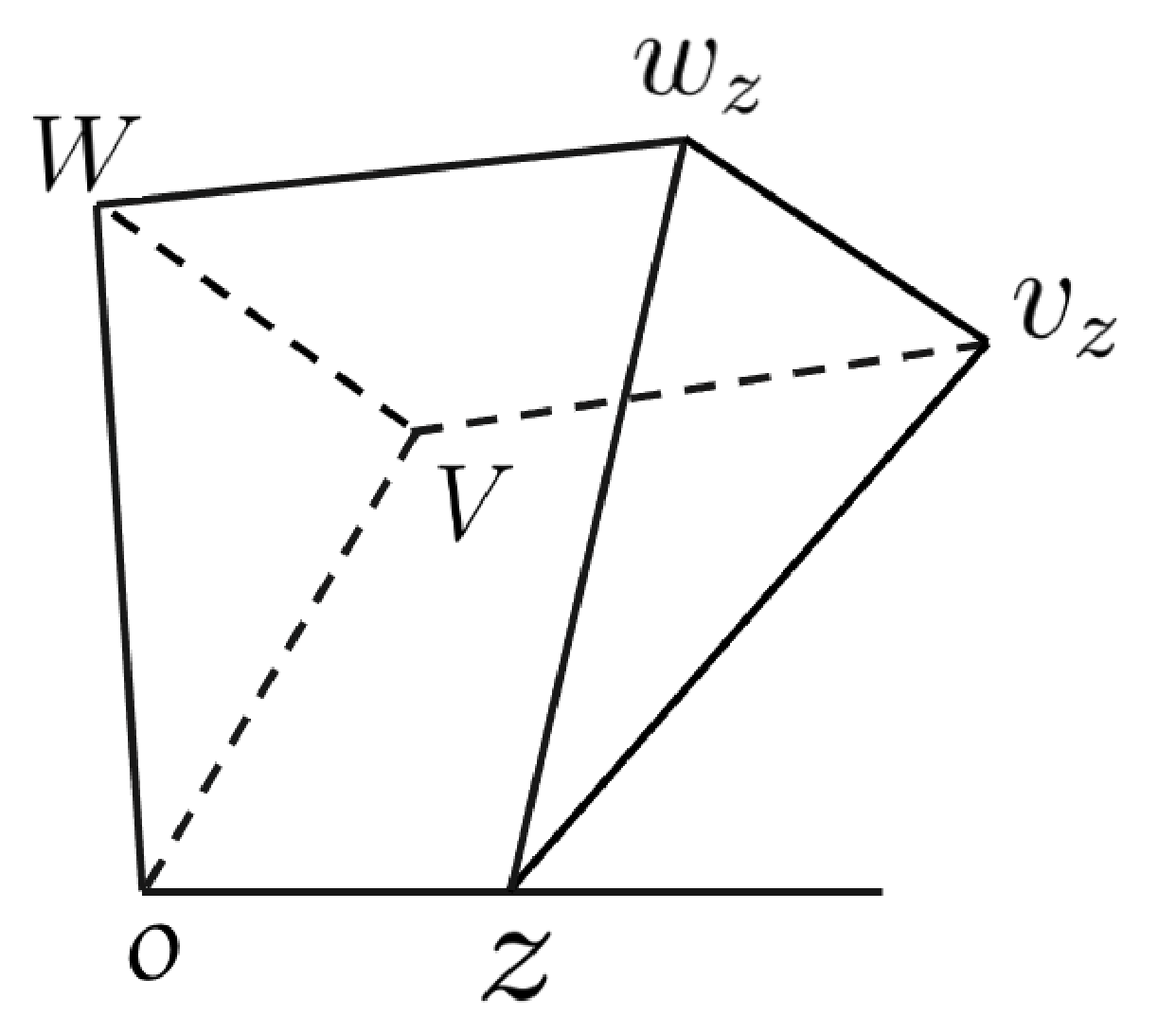}
\caption{}
\label{f:prism}
\end{figure*}
Thus as $z$ approaches~$\OO$ along the geodesic~$\exp_\OO tZ$, the
points $v_z$ and~$w_z$ converge to~$v$ and~$w$, respectively.

Since the geodesic $VW$ in~$T_\OO\XX$ does not intersect~$\IZ$, the
geodesic $v_z w_z$ does not intersect~$\OO z$ as $z \to \OO$.
Therefore the setup used to prove
Proposition~\ref{p:radial-is-isometry} (depicted in
Figure~\ref{f:radial}) is valid for any choice of lengths for $\OO z$
and~$\OO q$, even if the parameter~$s$ there grows arbitrarily large.
Thus the angle $\angle(V_z,W_z)$ and chordal distance
$\dd_\OO(v_z,w_z)$ from $v_z$ to~$w_z$ remain constant as $z$
converges to~$\OO$ along~$\exp_\OO tZ$, so $\dd_\OO(v_z,w_z) =
\dd_\OO(v,w)$.
Letting $r \to 0$ leads to the desired conclusion that $\angle(V,W) =
\angle(V_z,W_z)$.
\end{proof}

\begin{cor}\label{c:almost-iso-of-limit-log}
Let~$\MM$ be $\CAT(\kappa)$ and $\XX = \TpM$ with apex~$\OO$.  Fix
unit vectors $V,W,Z\in T_\OO\XX$ such that the geodesic in $T_\OO\XX$
from~$V$ to~$W$ does not intersect the shadow~$\IZ$ of~$Z$.  Then
$$
  \angle(V,W) = \angle(\LL_Z V,\LL_Z W)
  \quad\text{and}\quad
  \LL_Z V\,\LL_Z W = \LL_Z(VW).
$$
\end{cor}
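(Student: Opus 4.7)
The plan is to derive this corollary directly from Proposition~\ref{p:almost-iso-of-singular-radial-transport} by passing to the direct limit $\TZX$. The work of the proposition is essentially what we need; the only task is to translate the statement from the tangent cone $T_z\XX$ to the limit tangent cone.

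First I would set $z = \exp_\OO Z$ and let $V_z = \pOz V$, $W_z = \pOz W$. The hypothesis that the geodesic $VW$ in $T_\OO\XX$ avoids the shadow~$\IZ$ matches the hypothesis of Proposition~\ref{p:almost-iso-of-singular-radial-transport}, which therefore delivers
$$
  \angle(V, W) = \angle(V_z, W_z)
  \quad\text{and}\quad
  V_z W_z = \pOz(VW).
$$
All the real geometric content, in particular the parallelogram/half-strip argument, is already packaged into that proposition.

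Next I would appeal to the direct-limit construction of $\TZX = \varinjlim_{q \in \OO z} T_q\XX$ from Definition~\ref{d:limit-tangent-cone}. Proposition~\ref{p:radial-is-isometry} makes radial transport between any two non-apex points on~$\OO z$ a genuine isometry, so the transition maps in the direct system are isometries and the natural map $T_z\XX \to \TZX$ is an isometric embedding. By the definition of $\LL_Z$ (Definition~\ref{d:limit-log}), $\LL_Z V$ and $\LL_Z W$ are the images of $V_z$ and $W_z$ under this embedding. Consequently,
$$
  \angle(\LL_Z V, \LL_Z W) = \angle(V_z, W_z) = \angle(V, W),
$$
proving the angle identity. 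The isometric embedding also sends the geodesic $V_z W_z$ in $T_z\XX$ to the geodesic $\LL_Z V\, \LL_Z W$ in $\TZX$, and $\LL_Z$ factors as $\pOz$ followed by this embedding, so the second identity
$$
  \LL_Z(VW) = \LL_Z V\, \LL_Z W
$$
follows directly from $\pOz(VW) = V_z W_z$.

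The main obstacle is not really an obstacle but a bookkeeping check: verifying that angles and geodesics are well defined on the direct limit $\TZX$, independent of which representative $q \in \OO z$ one chooses. This is immediate from Proposition~\ref{p:radial-is-isometry} since the identifications $\pp_{q \to q'}$ preserve both data. Once this is in place, Corollary~\ref{c:almost-iso-of-limit-log} is essentially a restatement of Proposition~\ref{p:almost-iso-of-singular-radial-transport} in the limit tangent space.
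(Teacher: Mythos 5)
Your proposal is correct and follows exactly the route the paper takes: its entire proof reads ``Combine Proposition~\ref{p:almost-iso-of-singular-radial-transport} with the Definition~\ref{d:limit-log} of limit log map~$\LL_Z$.'' You have simply spelled out the direct-limit bookkeeping (that the transition maps $\pp_{q\to q'}$ are isometries by Proposition~\ref{p:radial-is-isometry}, so angles and geodesics descend to~$\TZX$) that the paper leaves implicit.
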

\begin{proof}
Combine Proposition~\ref{p:almost-iso-of-singular-radial-transport}
with the Definition~\ref{d:limit-log} of limit log map~$\LL_Z$.
\end{proof}

\begin{cor}\label{c:cont-of_LL}
In the setting of Definition~\ref{d:limit-log}, the limit log map
$\LL_Z$ is continuous.
\end{cor}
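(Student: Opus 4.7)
The plan is to check convergence $\LL_Z V_n \to \LL_Z V$ along an arbitrary sequence $V_n \to V$ in $T_\OO\XX$. From Definition~\ref{d:limit-log} one sees immediately that $\LL_Z$ preserves norms, so $\|\LL_Z V_n\| \to \|\LL_Z V\|$ is automatic. By the formula for the conical metric in Definition~\ref{d:conical-metric}, it therefore suffices to prove $\angle(\LL_Z V_n, \LL_Z V) \to 0$ when $V \neq 0$; the case $V = 0$ is already covered by norm convergence alone.

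I would split on whether $V$ lies in the shadow $\IZ$. When $V \notin \IZ$, meaning $\angle(V, Z) < \pi$, the triangle inequality in the angular metric on $\SpM$ (Proposition~\ref{p:angular-metric}) shows that in a sufficiently small angular neighborhood of $V$ on the unit sphere, any geodesic from $V$ to a nearby unit vector $W$ stays at angular distance less than $\pi$ from $Z$ and hence avoids $\IZ$. Corollary~\ref{c:almost-iso-of-limit-log} then makes $\LL_Z$ a local isometry at $V$ on the unit sphere; scaling by the norm delivers continuity at every nonzero $V$ outside the shadow.

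The main obstacle is the case $V \in \IZ$, where the limit log map is no longer locally an isometry: shadow vectors are collapsed to a single ray by Remark~\ref{r:parallel-in-shadow}, which identifies $\LL_Z V$ as the vector of length $\|V\|$ in the direction of $-\LL_Z(Z)$. The goal reduces to showing $\angle(\LL_Z V_n, -\LL_Z(Z)) \to 0$, and I would achieve this by passing to subsequences. If $V_n \in \IZ$ for the entire subsequence, the corresponding images already point in the direction $-\LL_Z(Z)$ with the correct norm, so converge. For a subsequence with $V_n \notin \IZ$, one has $\angle(Z, V_n) < \pi$, so the geodesic from $Z$ to $V_n$ on the unit sphere has length strictly less than $\pi$; by the triangle inequality every point on it lies at angular distance less than $\pi$ from $Z$ and hence avoids $\IZ$. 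Corollary~\ref{c:almost-iso-of-limit-log} applied with endpoints $Z$ and $V_n$ therefore yields $\angle(\LL_Z(Z), \LL_Z V_n) = \angle(Z, V_n) \to \pi$, and Proposition~\ref{p:sum=pi} converts this into
$$
\angle(\LL_Z V_n, -\LL_Z(Z)) = \pi - \angle(\LL_Z(Z), \LL_Z V_n) \to 0.
$$

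The conceptual crux is this last step: using the flatness of triangles with a vertex at the apex, encoded in Proposition~\ref{p:sum=pi}, to convert an angle equality on the $\LL_Z(Z)$ side into angular convergence toward the collapsed direction $-\LL_Z(Z)$. The remainder is essentially bookkeeping: reducing to unit vectors by splitting off the norm, and checking that the geodesic structure on $\SpM$ indeed makes the above dichotomy exhaustive.
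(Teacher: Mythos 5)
Your proof is correct, and for the only genuinely delicate case it takes a different route from the paper's. Both arguments dispose of the case $V \notin \IZ$ the same way, by shrinking the neighborhood until geodesics to nearby vectors miss the shadow and invoking Proposition~\ref{p:almost-iso-of-singular-radial-transport} (equivalently Corollary~\ref{c:almost-iso-of-limit-log}). The divergence is at the shadow: the paper splits $\IZ$ into its interior, where Remark~\ref{r:parallel-in-shadow} makes the claim immediate, and its boundary, which it handles by asserting that the flat half-strip spanned by $\exp_\OO(tV)$ and $\exp_z(t\,\pOz V)$ has width tending to $0$ as $V$ approaches $\IZ$, forcing $\pOz V \to \pOz W$. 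You instead treat all of $\IZ$ uniformly: continuity of angles at a fixed basepoint (Lemma~\ref{l:inner-product-is-continuous}) gives $\angle(Z,V_n) \to \pi$, preservation of the angle to $Z$ along shadow-avoiding geodesics gives $\angle(\LL_Z Z, \LL_Z V_n) = \angle(Z,V_n)$, and the angle-additivity identity of Proposition~\ref{p:sum=pi} converts this into $\angle(\LL_Z V_n, -\LL_Z Z) \to 0$, which is exactly convergence to the collapsed direction. (Your intermediate equality is precisely Corollary~\ref{c:LL-preserves-angle-to-Z}; since that corollary is proved without using continuity, there is no circularity in appealing to its content here.) What your route buys is a fully quantitative replacement for the paper's rather terse ``width decreases to~$0$'' assertion, plus a unification of the interior and boundary subcases; what the paper's route buys is a more directly geometric picture of the degenerating half-strip. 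The remaining bookkeeping in your write-up --- norm preservation, reduction of conical-metric convergence to angular convergence for $V \neq 0$, and the subsequence dichotomy --- is all sound.
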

\begin{proof}
It follows from Remark~\ref{r:parallel-in-shadow} when $W$ is in the
interior of~$\IZ$ and
Proposition~\ref{p:almost-iso-of-singular-radial-transport} when $W
\notin\IZ$ that
$$
  \lim_{V\to W} \LL_ZW = \LL_Z W.
$$
It remains to show that, for $W$ in the boundary of~$\IZ$,
$$
  \lim_{V \to W, V \notin\IZ} \LL_Z V = \LL_Z W.
$$
That is equivalent to
$$
  \lim_{V \to W, V \notin \IZ} \pOz V = \pOz W.
$$
The point, to this end, is that when the unit vector $V \in T_\OO\XX$
approaches $\IZ$ in the proof of
Proposition~\ref{p:convex-half-strip}, the width of the flat
half-strip spanned by $\exp_\OO(tV)$ and $\exp_Z(t\pOz V)$ decreases
to~$0$.  If $\|V\| \neq 1$ then rescale.
\end{proof}

\begin{cor}\label{c:LL-preserves-angle-to-Z}
Fix $Z \in T_\OO\XX$, where $\XX = \TpM$ and $\MM$ is~$\CAT(\kappa)$.
For all~$V \!\in\nolinebreak\! \TpM$,
$$
  \angle(V,Z) = \angle(\LL_Z V,\LL_Z Z).
$$
\end{cor}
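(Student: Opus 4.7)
The approach is to reduce to Corollary~\ref{c:almost-iso-of-limit-log} with $W = Z$, after handling the degenerate case $V \in \IZ$ separately. By scale-invariance of the angle function, I may assume $V$ and $Z$ are unit vectors.

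If $V$ lies in the shadow $\IZ$, so that $\angle(V,Z) = \pi$, then Remark~\ref{r:parallel-in-shadow} tells us that $\LL_Z V$ is a positive scalar multiple of $-\LL_Z(Z)$. Hence $\angle(\LL_Z V, \LL_Z Z) = \angle(-\LL_Z Z, \LL_Z Z) = \pi$ by Proposition~\ref{p:sum=pi} (or directly from the definition of $-\LL_Z Z$), matching $\angle(V,Z)$.

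For the main case $\angle(V,Z) < \pi$, the plan is to verify that the geodesic from $V$ to $Z$ in $T_\OO\XX$ does not meet $\IZ$, after which Corollary~\ref{c:almost-iso-of-limit-log} applied with $W = Z$ delivers the desired equality at once. By Lemma~\ref{l:flat-triangle}, the triangle $\triangle \OO V Z$ has a vertex at the apex and is therefore flat, so it may be realized isometrically inside~$\RR^2$; in this Euclidean model the geodesic from $V$ to $Z$ is a straight segment consisting of convex combinations of $V$ and~$Z$. Any such convex combination lying in $\IZ$---i.e., any negative scalar multiple of~$Z$---would force $V$ itself to be a negative scalar multiple of $Z$, contradicting $\angle(V,Z) < \pi$. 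The main obstacle I anticipate is matching the flat-triangle geodesic furnished by Lemma~\ref{l:flat-triangle} with the geodesic appearing in the hypothesis of Corollary~\ref{c:almost-iso-of-limit-log}; but both are geodesics between the same two points in the $\CAT(0)$ space $(T_\OO\XX, \dd_\OO)$ of Proposition~\ref{p:TpM_is_NPC}, where such geodesics are unique, so there is no ambiguity.
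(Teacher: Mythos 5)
Your proof is correct and takes essentially the same route as the paper: the paper's own proof is exactly your two-case split, with $V \in \IZ$ handled by Remark~\ref{r:parallel-in-shadow} and $\angle(V,Z) < \pi$ handled by Proposition~\ref{p:almost-iso-of-singular-radial-transport}, of which Corollary~\ref{c:almost-iso-of-limit-log} is just the limit-log restatement. Your extra verification via Lemma~\ref{l:flat-triangle} that the geodesic $VZ$ avoids $\IZ$ when $\angle(V,Z) < \pi$ is a detail the paper leaves implicit, and it is sound because the isometric Euclidean realization of the flat triangle preserves angles at the apex via the conical metric.
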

\begin{proof}
Either $V$ lies in the shadow~$\IZ$, so both angles are~$\pi$ by
Remark~\ref{r:parallel-in-shadow}, or else $\angle(Z,V) < \pi$, in
which case use
Proposition~\ref{p:almost-iso-of-singular-radial-transport}.
\end{proof}

\begin{prop}\label{p:limit-log-contracts}
Let~$\MM$ be $\CAT(\kappa)$ and $\XX = \TpM$ with apex~$\OO$.  The
limit log map is a contraction: if $\XX = \TpM$ and $Z \in\nolinebreak
T_\OO\XX$~then
$$
  \angle(\LL_Z V, \LL_Z W) \leq \angle(V,W)
$$
for any $V,W \in \XX$.
\end{prop}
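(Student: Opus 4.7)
The plan is to reduce to unit vectors $V$ and $W$ (since $\LL_Z$ commutes with nonnegative scaling and angles depend only on directions) and to $\alpha := \angle(V,W) < \pi$, the bound $\angle(\LL_Z V,\LL_Z W) \leq \pi$ being automatic when $\alpha = \pi$. Proposition~\ref{p:angular-metric} then supplies a geodesic $\gamma \colon [0,\alpha] \to \SpM$ from~$V$ to~$W$ of length~$\alpha$, parametrized by arclength. The shadow~$\IZ$ is closed in~$T_\OO\XX$, being the preimage of~$\{\pi\}$ under the continuous fixed-basepoint angle function $\angle(\,\cdot\,,Z)$ from Lemma~\ref{l:inner-product-is-continuous}, so $\gamma^{-1}(\IZ)$ is closed in~$[0,\alpha]$.

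If $\gamma$ misses~$\IZ$ entirely, the geodesic chord from~$V$ to~$W$ in $T_\OO\XX$ also misses~$\IZ$ (the chord lies in the flat sector of Lemma~\ref{l:flat-triangle}, and every chord point's direction traces~$\gamma$), so Corollary~\ref{c:almost-iso-of-limit-log} already gives the stronger equality $\angle(\LL_Z V,\LL_Z W) = \alpha$. Otherwise, set $a := \min \gamma^{-1}(\IZ)$ and $b := \max \gamma^{-1}(\IZ)$. For each $a' \in [0,a)$ the sub-arc $\gamma|_{[0,a']}$ avoids~$\IZ$, so Corollary~\ref{c:almost-iso-of-limit-log} yields $\angle(\LL_Z V,\LL_Z \gamma(a')) = a'$. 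Letting $a' \to a^-$ and invoking continuity of~$\LL_Z$ (Corollary~\ref{c:cont-of_LL}) together with continuity of the fixed-basepoint angle (Lemma~\ref{l:inner-product-is-continuous}) gives $\angle(\LL_Z V,\LL_Z \gamma(a)) = a$, and symmetrically $\angle(\LL_Z \gamma(b),\LL_Z W) = \alpha - b$. By Remark~\ref{r:parallel-in-shadow}, every unit vector in~$\IZ$ has the same image $U_0 := -\LL_Z(Z)/\norm{Z}$ under~$\LL_Z$ (since $\LL_Z$ preserves lengths and all such images are positive scalar multiples of the unit vector $-\LL_Z(Z)/\norm{Z}$), so $\LL_Z\gamma(a) = \LL_Z\gamma(b) = U_0$. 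Concatenating the two image paths $\LL_Z \circ \gamma|_{[0,a]}$ and $\LL_Z \circ \gamma|_{[b,\alpha]}$ at~$U_0$ produces a path in $\vS_Z\XX$ from $\LL_Z V$ to $\LL_Z W$ of length at most $a + (\alpha - b) \leq \alpha < \pi$, so the angular distance between these endpoints lies below~$\pi$, coincides with $\angle(\LL_Z V,\LL_Z W)$ by Proposition~\ref{p:angular-metric} applied in~$\TZX$, and is at most~$\alpha$, as desired.

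The hard part is the limit step $a' \to a^-$ (and symmetrically at~$b$): the hypothesis of Corollary~\ref{c:almost-iso-of-limit-log} fails whenever one endpoint lies in~$\IZ$, so the isometric behavior of~$\LL_Z$ on open sub-arcs must be upgraded to their closed endpoints via Corollary~\ref{c:cont-of_LL}. That upgrade, together with the collapse of the entire shadow portion of~$\gamma$ onto the single point~$U_0$, is what converts two separate isometric identifications into a single contraction inequality.
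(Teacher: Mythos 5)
Your proof is correct and follows essentially the same route as the paper's: split the spherical geodesic from $V$ to $W$ at the first and last points where it meets the shadow, extend the isometric behavior of $\LL_Z$ on the shadow-free arcs to those endpoints by continuity (Corollary~\ref{c:cont-of_LL}), collapse the middle portion to the single direction $-\LL_Z(Z)$ via Remark~\ref{r:parallel-in-shadow}, and bound the angle by the length of the concatenated image path. Your write-up merely supplies details the paper leaves implicit (closedness of~$\IZ$, the reduction to $\angle(V,W)<\pi$, and the passage from path length in $\vS_Z\XX$ back to angle via Proposition~\ref{p:angular-metric}).
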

\begin{proof}
First observe that
Proposition~\ref{p:almost-iso-of-singular-radial-transport} remains
true if one of the endpoints of the geodesic~$VW$ lies in the
shadow~$\IZ$ but $VW$ is otherwise disjoint from\/~$\IZ$, by using
continuity in Corollary~\ref{c:cont-of_LL} to approach that endpoint
from $VW \setminus \IZ$.

If $VW$ never enters the shadow, then the desired result is subsumed
by Proposition~\ref{p:almost-iso-of-singular-radial-transport}.  On
the other hand, if $V$ and~$W$ are unit vectors and $VW$ enters the
shadow, then so does the shortest path~$\gamma$ joining $V$ to~$W$ in
the unit tangent sphere.
The length of~$\gamma$ is $\angle(V,W)$ by definition.  But the limit
log map takes the first and last shadow points in~$\gamma$ to the same
point, namely $-\LL_Z Z$, by Remark~\ref{r:parallel-in-shadow} (or
Proposition~\ref{p:sum=pi}, if that is preferred).  Therefore,
although the limit log map preserves the lengths of the initial and
terminal segments of~$\gamma$, which occur before entering~$\IZ$
and after its final exit, the rest of $\LL_Z(\gamma)$ is shortcut by
remaining at~$-\LL_Z Z$.
\end{proof}

Continuity allows the hypothesis of
Corollary~\ref{c:almost-iso-of-limit-log} to be weakened to allow the
geodesic from $V$ to~$W$ to meet the shadow~$\IZ$ at exactly one
point, as in Figure~\ref{f:equal-angles}.
\begin{figure*}[!ht]
\centering
\includegraphics[width=0.35\textwidth]{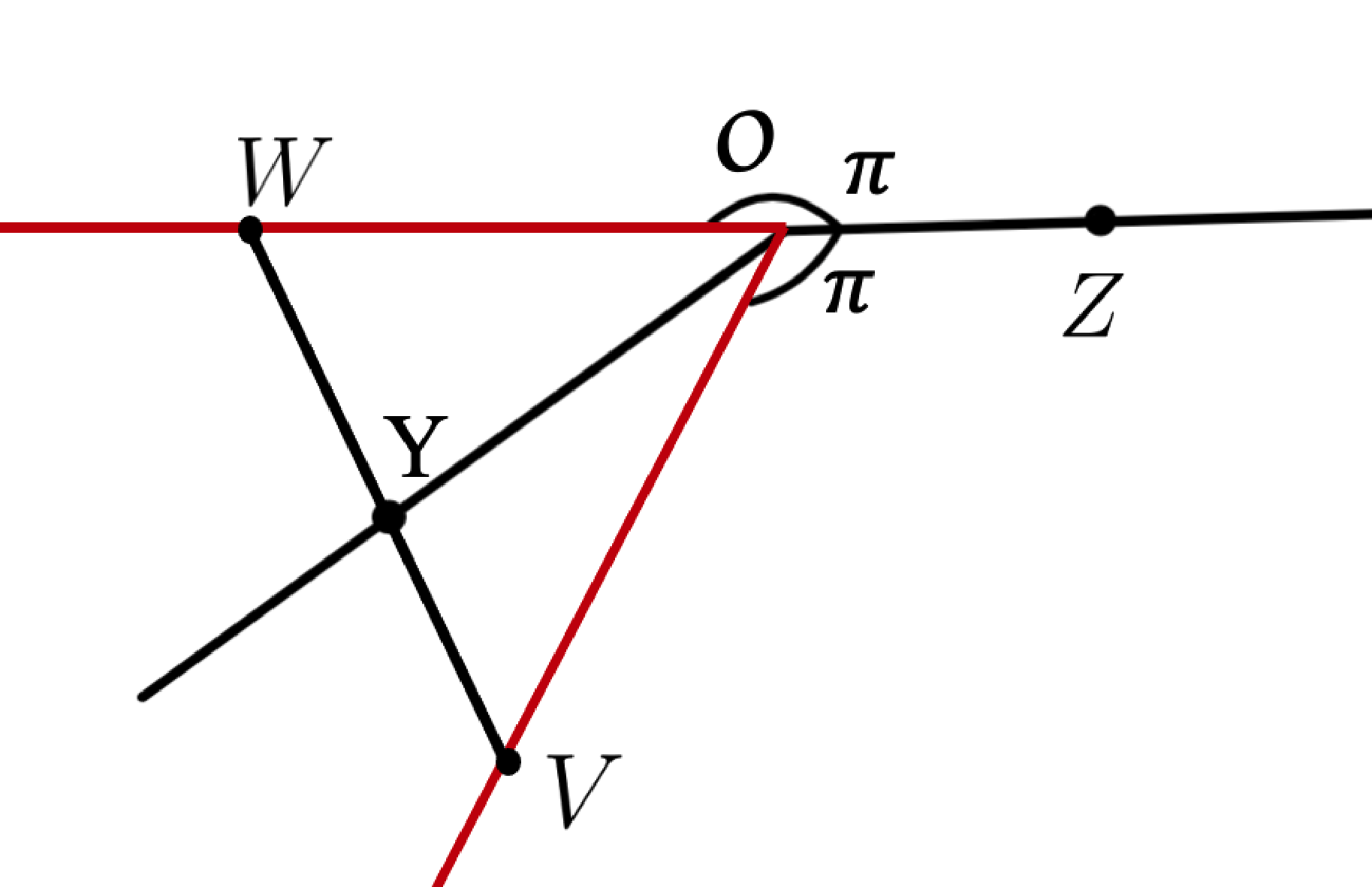}
\caption{Geodesic $VW$ meets the shadow~$\IZ$ at exactly one point.}
\label{f:equal-angles}
\end{figure*}

\begin{prop}\label{p:almost_iso-of_limit-log:v2}
Let~$\MM$ be $\CAT(\kappa)$ and $\XX = \TpM$.  For unit vectors $V,W,Z
\in T_\OO\XX$ such that every shortest path from~$V$ to~$W$
in~$T_\OO\XX$ meets $\IZ$ at exactly one point,
$$
  \dd_\OO(V,W) = \dd_Z(\LL_Z V,\LL_Z W)
  \quad\text{and}\quad
  \LL_Z V\,\LL_Z W = \LL_Z(VW),
$$
where $\dd_Z$ is the conical metric on~$\TZX$ from
Definition~\ref{d:conical-metric} applied to
Definition~\ref{d:limit-tangent-cone}.
\end{prop}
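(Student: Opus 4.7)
My plan is to split the shortest path from $V$ to $W$ at the unique point where it meets $\IZ$, apply the continuity-extended form of Corollary~\ref{c:almost-iso-of-limit-log} to each half, and glue the two pieces into a single geodesic in $\TZX$. Parametrize the shortest path $\gamma:[0,L]\to T_\OO\XX$ at unit speed with $\gamma(0)=V$, $\gamma(L)=W$, and $\gamma(t_0)=P\in\IZ$ for the unique $t_0\in(0,L)$. Each sub-geodesic $\gamma|_{[0,t_0]}$ and $\gamma|_{[t_0,L]}$ meets $\IZ$ only at the endpoint $P$, so the extension of Corollary~\ref{c:almost-iso-of-limit-log} described in the first paragraph of the proof of Proposition~\ref{p:limit-log-contracts}---via continuity from Corollary~\ref{c:cont-of_LL}---applies to each half. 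This gives $\angle(V,P)=\angle(\LL_Z V,\LL_Z P)$ and $\angle(P,W)=\angle(\LL_Z P,\LL_Z W)$, together with the identifications $\LL_Z(VP)=\LL_Z V\,\LL_Z P$ and $\LL_Z(PW)=\LL_Z P\,\LL_Z W$ at the level of geodesics in~$\TZX$. Consequently $\LL_Z\circ\gamma$ is a constant-speed path of length $L$ in $\TZX$ from $\LL_Z V$ to $\LL_Z W$ passing through $\LL_Z P=\|P\|(-\LL_Z Z)$.

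The main obstacle is to show this concatenation is a shortest path in $\TZX$ rather than merely a piecewise geodesic. My plan is to verify that the angle at $\LL_Z P$ between the two incoming geodesic directions is $\pi$; this would make the concatenation a local geodesic in the $\CAT(0)$ space $\TZX$ and hence a global shortest path. In $T_\OO\XX$, the angle at $P$ between the two halves of $\gamma$ is $\pi$, since $\gamma$ is a single geodesic through $P$. The hypothesis $\gamma\cap\IZ=\{P\}$ is exactly what keeps both approaches to $P$ outside $\IZ$ on their interiors, so the isometries furnished by the extended Corollary~\ref{c:almost-iso-of-limit-log} carry the initial-velocity data at $P$ over to $\LL_Z P$ unchanged and transmit the angle identity $\pi$ to $\TZX$.

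Once the concatenation is known to be a shortest path of length $L$ in $\TZX$, the geodesic identification $\LL_Z V\,\LL_Z W=\LL_Z(VW)$ follows from uniqueness of geodesics in the $\CAT(0)$ space $\TZX$, and $\dd_\OO(V,W)=\dd_Z(\LL_Z V,\LL_Z W)=L$ is then immediate.
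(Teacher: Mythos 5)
Your decomposition of $VW$ at the unique shadow point $P$ and your application of the one-endpoint-in-the-shadow extension of Corollary~\ref{c:almost-iso-of-limit-log} to each half match the opening of the paper's proof (the paper's $Y$ is your $P$), and you correctly identify the crux: the concatenated image must be shown to be a genuine shortest path, which amounts to the angle at $\LL_Z P$ between the two image segments being~$\pi$. But your justification of that angle identity is where the argument breaks. Knowing that $\LL_Z$ restricts to an isometry on the segment $VP$ and separately on the segment $PW$ says nothing about the angle between the two image segments at their common endpoint: each segment is one-dimensional, so the two restricted isometries carry no joint data, and the angle at $\LL_Z P$ is governed by cross-distances $\dd_Z(\LL_Z a, \LL_Z b)$ for $a \in VP$ and $b \in PW$, which neither restriction controls. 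Worse, the only global estimate available, Proposition~\ref{p:limit-log-contracts}, makes $\LL_Z$ a contraction, so it bounds these cross-distances from \emph{above}---the wrong direction for showing the concatenation does not shortcut---and the whole point of the shadow collapse is that $\LL_Z$ genuinely decreases some angles. ``Transmitting the angle identity $\pi$'' is therefore exactly the statement to be proved, not a consequence of the two isometries.

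The paper closes this gap with an additional construction: it builds the flat half-planes $H_V$ and $H_W$ containing $\OO z$ and the rays through $v$ and $w$, introduces rays $R_V$ and $R_W$ from the apex parallel to $YV$ and $YW$, and applies the contraction property of radial transport in the direction that gives a lower bound, namely $\angle(R_V,R_W) \geq \angle(YV,YW) = \pi$ and hence $\angle(R_V,R_W) = \pi$; this produces comparison points $V', W'$ with $|V'W'| = |VY| + |YW| = |VW|$, and a final $\ve$-perturbation into a nondegenerate flat sector transfers this to $|v_z w_z| = |vw|$. Some argument of this kind, supplying a lower bound on the image distance, is indispensable; without it your proof establishes only $\dd_Z(\LL_Z V, \LL_Z W) \leq \dd_\OO(V,W)$, which is already Proposition~\ref{p:limit-log-contracts}.
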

\begin{proof}
Let $Y = VW \cap \IZ$ be the single intersection point.  The result is
true if $Y = V$ or $Y = W$ by the first paragraph of the proof of
Proposition~\ref{p:limit-log-contracts}.  So break $VW$ into two
pieces: $VW = VY \cup YW$ with $Y \not\in \{V,W\}$.  Similar to the
proof of Proposition~\ref{p:almost-iso-of-singular-radial-transport},
write $z = \exp_OZ$ and $U_z = \pOz U$ for $U \in T_\OO\XX$, with
\begin{align*}
    v = \exp_\OO V,\ &\ \ \ \,v_z = \exp_zV_z,
\\  w = \exp_\OO W,  &\ \ \   w_z = \exp_zW_z.
\\  y = \exp_\OO Y,\ &\ \ \ \,y_z = \exp_zY_z.
\end{align*}

Since $Y \not\in \{V,W\}$ is the only intersection between $VW$ and
the shadow, $\angle(V,Z) <\nolinebreak \pi$ and $\angle(W,Z) < \pi$.
Applying the setup from the proof of
Proposition~\ref{p:almost-iso-of-singular-radial-transport} (see
Figure~\ref{f:prism}) and continuity of~$\LL_Z$, the parallelogram
$\OO z w_z w \subseteq \XX$ expands to a closed
half-plane~$H_W$---isometric to a half-plane in~$\RR^2$, with boundary
line spanned by~$\OO z$---that contains the ray~$\OO y$.  Similarly,
$\OO z v_z v$ expands to a closed half-plane~$H_V$.  Then $\LL_Z(UY) =
U_z Y_z$ for $U \in \{V,W\}$ by the first paragraph of the proof of
Proposition~\ref{p:limit-log-contracts} again.  The goal is to show
that $|VW| = |V_z W_z|$, or equivalenty $|vw| = |v_z w_z|$, by the
isometry $\exp_\OO: \XX \to T_\OO\XX$.

For $U \in \{V,W\}$ the half-plane $H_U$ contains a ray~$R_U$
from~$\OO$ parallel to~$YU$.  Parallel transport $\pp_{\OO \to y}$
demonstrates, by contraction in
Proposition~\ref{p:limit-log-contracts}, that $\angle(R_V,R_W) \geq
\angle(YV,YW) = \pi$, and hence $\angle(R_V,R_W) = \pi$.
For $U \in \{V,W\}$ Let
$$
  U' = R_U \cap U_z U.
$$
Then $\angle(R_V,R_W) = \pi$ means $\angle(\OO V',\OO W') = \pi$ and
$$
  |V'W'| = \norm{V'} + \norm{W'} = |VY| + |YW| = |VW|.
$$
Exponentiating at~$\OO$ yields $|v'w'| = |vw|$, so it suffices to show
$|v'w'| = |v_z w_z|$.
  
For $\ve > 0$, let $v'(\ve) = \exp_\OO\bigl(V'(\ve)\bigr)$ be a point
at distance~$\ve$ from~$v'$ along the segment~$v' v_z$ in~$\XX$.  The
ray $R_v^\ve$ from~$\OO$ through~$v'(\ve)$ has angle
$\angle(R_v^\ve,R_w) < \pi$ with the ray~$R_w = \exp_\OO(R_W)$.  By
Lemma~\ref{l:flat-triangle} the convex hull of $R_v^\ve$ and~$R_w$ is
a flat sector containing $w'$, $w_z$, $v_z$, $v'(\ve)$, and $v_z(\ve)
= \exp_z\bigl(\pp_{\OO \to z} V'(\ve)\bigr)$.  Elementary geometry in
this flat sector, using the parallelograms defined earlier, shows that
$|v'(\ve) w'| = |v_z(\ve) w_z|$ for all $\ve > 0$.  Letting $\ve \to
0$ shows that $|v' w'| = |v_z w_z|$, as desired.
\end{proof}

\begin{remark}\label{r:when-you-hit-two-shadows}
Proposition~\ref{p:almost_iso-of_limit-log:v2} is the strongest result
possible by the argument that ends the proof of
Proposition~\ref{p:limit-log-contracts}.  The proof of
Proposition~\ref{p:almost_iso-of_limit-log:v2}, if carried out to its
logical end, actually shows that the union of the half-planes $H_V$
and~$H_W$ is an isometric copy of~$\RR^2$ inside of~$\XX$.
\end{remark}

A rephrasing of Proposition~\ref{p:almost_iso-of_limit-log:v2} is the
main result of the paper.  It is one of the geometric drivers of
central limit theory for measures on
smoothly stratified metric spaces
\cite[Corollary~2.30]{tangential-collapse}.

\begin{thm}\label{t:isometry-limit-log}
Let~$\MM$ be $\CAT(\kappa)$ and $\XX = \TpM$.  If $Z \in T_\OO\XX$ and
$\KK \subseteq \TpM$ is a geodesically convex subcone containing at
most one ray in the shadow~$\IZ$, then the restriction $\LL_Z|_\KK:
\KK \to \LL_Z(\KK)$ of the limit log map along~$Z$ is an isometry
onto~its~\mbox{image}.
\end{thm}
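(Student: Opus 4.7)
The plan is to reduce the theorem to the preservation of angles between unit vectors in~$\KK$, since $\LL_Z$ is radially linear by construction and hence norm-preserving. Applying the conical-metric formula (Definition~\ref{d:conical-metric}) at both ends, I obtain
$$
  \dd_\OO(V,W)^2 - \dd_Z(\LL_Z V,\LL_Z W)^2
  =
  2\|V\|\|W\|\bigl(\cos\angle(\LL_Z V,\LL_Z W) - \cos\angle(V,W)\bigr)
$$
for arbitrary $V, W \in \KK$, so the desired isometry reduces to proving $\angle(V,W) = \angle(\LL_Z V,\LL_Z W)$ for every pair of unit vectors $V,W \in \KK$, the case where one of them is the apex being trivial.

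Fix distinct unit vectors $V,W \in \KK$ and let $\gamma$ be the shortest path from $V$ to $W$ in~$\TpM$. Geodesic convexity of $\KK$ places $\gamma \subseteq \KK$, so the hypothesis that $\KK$ contains at most one ray in~$\IZ$ means that $\gamma \cap \IZ$ is contained in a single ray from~$\OO$. In the generic regime $\angle(V,W) < \pi$, Lemma~\ref{l:flat-triangle} exhibits $\gamma$ as a straight chord inside a flat Euclidean sector with apex~$\OO$, so $\gamma$ meets any ray from~$\OO$ (in particular the unique shadow ray of~$\KK$, if present) in at most one point.

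Consequently $\gamma \cap \IZ$ is either empty or a singleton. In the empty case Corollary~\ref{c:almost-iso-of-limit-log} applies and yields the angle equality directly. In the singleton case Proposition~\ref{p:almost_iso-of_limit-log:v2} yields the stronger identity $\dd_\OO(V,W) = \dd_Z(\LL_Z V,\LL_Z W)$, which, by norm preservation and the conical-metric formula again, is equivalent to the desired angle equality.

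The main obstacle is the boundary case $\angle(V,W) = \pi$, where $\gamma$ runs through the apex and neither of the previous two almost-isometry results is directly applicable. My approach here would be to approximate $V$ and $W$ by nearby unit vectors in $\KK$ with strictly smaller pairwise angle, dispose of those using the generic cases, and then pass to the limit using continuity of $\LL_Z$ from Corollary~\ref{c:cont-of_LL}. Ensuring that such approximations exist inside $\KK$ and that their geodesics introduce no new shadow intersections---that is, that the combined force of ``geodesically convex subcone'' and ``at most one ray in~$\IZ$'' genuinely rules out the strict-contraction regime identified in the proof of Proposition~\ref{p:limit-log-contracts}---is the principal difficulty this step must surmount.
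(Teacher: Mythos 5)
Your reduction to angle preservation between unit vectors is sound, and your handling of the generic regime $\angle(V,W)<\pi$ --- where convexity of~$\KK$ plus the flat-sector picture of Lemma~\ref{l:flat-triangle} forces the chord $VW$ to meet the single shadow ray of~$\KK$ in at most one point, after which Corollary~\ref{c:almost-iso-of-limit-log} or Proposition~\ref{p:almost_iso-of_limit-log:v2} applies --- is exactly the content of the paper's one-line proof. But the case $\angle(V,W)=\pi$ that you flag is a genuine gap, and the approximation-plus-continuity repair you sketch cannot close it. First, the approximating vectors need not exist: the union of two rays at mutual angle~$\pi$ is already a geodesically convex subcone, and it contains no unit vectors near $V$ and~$W$ other than $V$ and~$W$ themselves. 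Second, and decisively, the conclusion itself can fail in this regime, so no limiting argument will succeed. Take $\XX$ to be the cone over a circle of circumference~$3\pi$ (as in Figure~\ref{f:no-ray}), with $Z$ at angular position~$0$, so $\IZ$ is the cone over the arc $[\pi,2\pi]$. Let $V$ and $W$ be the unit vectors at positions $0.9\pi$ and $2.1\pi$, and let $\KK$ be the union of the two rays they span. This $\KK$ is a geodesically convex subcone (every geodesic joining the two rays passes through~$\OO$) containing no ray of~$\IZ$ at all, yet $\angle(V,W)=\pi$ while Corollary~\ref{c:LL-preserves-angle-to-Z}, applied in the two disjoint flat sectors on either side of the ray~$\OO z$, gives $\angle(\LL_ZV,\LL_ZZ)=\angle(\LL_ZW,\LL_ZZ)=0.9\pi$ with $\LL_ZV$ and $\LL_ZW$ on opposite sides of~$\LL_ZZ$ in $\TZX\cong\RR^2$, whence $\angle(\LL_ZV,\LL_ZW)=0.2\pi$ and
$$
  \dd_Z(\LL_Z V,\LL_Z W)=2\sin(0.1\pi)<2=\dd_\OO(V,W).
$$
The same failure occurs with exactly one shadow ray present: put $V$ at position $\pi$ and $W$ at $2.5\pi$; then $\LL_ZV$ points along $-\LL_ZZ$ and Proposition~\ref{p:sum=pi} gives $\angle(\LL_ZV,\LL_ZW)=\pi/2\neq\pi$.

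The upshot is that ``geodesically convex subcone containing at most one ray in~$\IZ$'' does not by itself deliver the hypothesis of Proposition~\ref{p:almost_iso-of_limit-log:v2} for every pair of points of~$\KK$: when a geodesic of~$\KK$ passes through the apex, one of its two radial legs can lie entirely in~$\IZ$, or the two legs can be separated by the shadow without meeting it, and in either situation $\LL_Z$ strictly contracts (consistently with Proposition~\ref{p:limit-log-contracts}). Your argument is therefore complete precisely on those cones~$\KK$ for which every shortest path between points of~$\KK$ meets~$\IZ$ in at most one point --- for instance, when any two distinct rays of~$\KK$ make an angle strictly less than~$\pi$ --- and that is the reading under which the citation of Proposition~\ref{p:almost_iso-of_limit-log:v2} suffices. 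Rather than trying to surmount the antipodal case, you should add this condition to the hypotheses (or verify that it holds for the cones arising in the intended application); as stated, the ``principal difficulty'' you isolate is not a difficulty of proof but a counterexample.
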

\begin{proof}
This is a direct consequence of
Proposition~\ref{p:almost_iso-of_limit-log:v2}.
\end{proof}

\begin{remark}\label{r:isometry-limit-log}
Theorem~\ref{t:isometry-limit-log} summarizes the limit log
map~$\LL_Z$ the following way: $\LL_Z$ collapses the shadow $\IZ$ to a
single ray (Remark~\ref{r:parallel-in-shadow}) while preserving the
rest of~$\XX = \TpM$ isometrically.  Of course, any part of any
geodesic that passes through the shadow collapses to a segement along
the ray that is the collapsed image under~$\LL_Z$ of the shadow, but
all geodesics otherwise maintain their integrity.
\end{remark}

A~simple consequence of Theorem~\ref{t:isometry-limit-log} is
generally useful and arises while manufacturing Gaussian-distributed
vectors on the tangent cone of a smoothly stratified $\CAT(\kappa)$
space~$\MM$ \cite[Section~6.1]{escape-vectors}.

\begin{cor}\label{c:proper}
Let~$\MM$ be $\CAT(\kappa)$ and $\XX = \TpM$.  If $Z \in T_\OO\XX$
then the limit log along~$Z$ is a proper mapping.
\end{cor}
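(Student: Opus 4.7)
The plan is to prove properness by the standard closed-and-bounded argument, with the central observation being that $\LL_Z$ preserves lengths of tangent vectors. Properness means that for every compact $K \subseteq \TZX$, the preimage $\LL_Z^{-1}(K)$ is compact in $T_\OO\XX$, so I would establish this in three steps.

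First I would record the length-preservation property. Directly from Definition~\ref{d:radial-transport-from-OO} and Definition~\ref{d:limit-log}, if $V \in T_\OO\XX$ is a unit vector then $\LL_Z(tV) = tV_Z$ for every $t \geq 0$, where $V_Z$ is the unit vector in~$\TZX$ produced by radial transport from the apex. (The norm on $\TZX$ is well-defined on the direct limit because radial transport between non-apex points is an isometry by Proposition~\ref{p:radial-is-isometry}, and in particular preserves unit vectors.) Consequently
$$
  \|\LL_Z U\| = \|U\| \quad \text{for every } U \in T_\OO\XX.
$$

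Second, I would combine this with continuity. Given a compact set $K \subseteq \TZX$, set $R = \max_{W \in K}\|W\| < \infty$. Length preservation forces
$$
  \LL_Z^{-1}(K) \subseteq \{U \in T_\OO\XX : \|U\| \leq R\},
$$
so $\LL_Z^{-1}(K)$ is bounded. By continuity of $\LL_Z$ (Corollary~\ref{c:cont-of_LL}), $\LL_Z^{-1}(K)$ is also closed in~$T_\OO\XX$.

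The main obstacle—and it is really a matter of invoking the correct background rather than a genuine difficulty—is passing from closed-and-bounded to compact. For this I would appeal to the local compactness hypothesis on~$\MM$ that is used in Proposition~\ref{p:TpM_is_NPC} and Corollary~\ref{c:S-is-CAT(1)}: under that hypothesis, $T_\OO\XX$ with its conical metric is a complete, locally compact length space, hence proper (closed balls are compact) by the Hopf--Rinow theorem for length spaces (\cite[Theorem~2.5.28]{BBI01}). Thus the closed bounded set $\LL_Z^{-1}(K)$ is compact, and $\LL_Z$ is proper.
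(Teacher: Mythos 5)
Your proof is correct and follows essentially the same route as the paper, which likewise argues that the limit log preserves distance from the apex and then invokes local compactness of~$\MM$; you simply fill in the Hopf--Rinow and closed-preimage details that the paper leaves implicit. The one small (harmless) difference is that you extract norm preservation directly from Definitions~\ref{d:radial-transport-from-OO} and~\ref{d:limit-log}, whereas the paper cites Theorem~\ref{t:isometry-limit-log}; your version is if anything more elementary.
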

\begin{proof}
Limit log preserves distance from the apex by
Theorem~\ref{t:isometry-limit-log} (see
Remark~\ref{r:isometry-limit-log}) and $\MM$ is locally compact.
\end{proof}

The final less elementary consequence of
Theorem~\ref{t:isometry-limit-log} is key to preservation of
fluctuating cones under limit log maps in subsequent work
\cite[Corollary~2.27]{tangential-collapse}.
It requires a simple definition.

\begin{defn}[Hull]\label{d:hull}
Given a subset $\cS \subseteq \XX$ of a $\CAT(0)$ conical space~$\XX$,
the \emph{hull} of~$\cS$ is the smallest geodesically convex cone
$\hull\cS \subseteq \XX$ containing~$\cS$.
\end{defn}

\begin{cor}\label{c:hull-preserved-by-limit-log}
Let~$\MM$ be $\CAT(\kappa)$ and $\XX = \TpM$.  For any $Z \in
T_\OO\XX$, taking limit log along~$Z$ subcommutes with taking convex
cones: for any subset $\cS \subseteq \XX$,
$$
  \LL_Z(\hull\cS) \subseteq \hull\LL_Z(\cS).
$$
\end{cor}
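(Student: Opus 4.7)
The plan is to define $\KK := \LL_Z^{-1}(\hull\LL_Z(\cS)) \subseteq \XX$ and show that $\KK$ is a geodesically convex cone containing~$\cS$. By minimality of the hull in Definition~\ref{d:hull}, this will force $\hull\cS \subseteq \KK$, which is exactly the desired inclusion $\LL_Z(\hull\cS) \subseteq \hull\LL_Z(\cS)$.

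Two of the three required properties of $\KK$ are immediate. First, $\cS \subseteq \KK$ because $\LL_Z(\cS) \subseteq \hull\LL_Z(\cS)$. Second, $\LL_Z$ commutes with scaling by Definition~\ref{d:limit-log}, and $\hull\LL_Z(\cS)$ is a cone, so $\KK$ is closed under positive scaling.

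The essential step is geodesic convexity. Fix $V, W \in \KK$; I need $[V, W]_\XX \subseteq \KK$. By Lemma~\ref{l:flat-triangle}, the subcone $\hull\{V, W\}$ is a 2D flat sector at the apex containing $[V, W]_\XX$. In the easy case that this sector contains at most one ray of the shadow~$\IZ$, Theorem~\ref{t:isometry-limit-log} makes $\LL_Z|_{\hull\{V, W\}}$ an isometry onto its image, so taking hulls commutes with $\LL_Z$ and $\LL_Z(\hull\{V, W\}) = \hull\{\LL_Z V, \LL_Z W\} \subseteq \hull\LL_Z(\cS)$, giving $[V, W]_\XX \subseteq \KK$.

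The main obstacle is when $\hull\{V, W\}$ contains more than one shadow ray. Then the shadow intersects the flat sector in a sub-wedge (or several) that $\LL_Z$ collapses onto the single ray $R^* = \RR_{\geq 0}(-\LL_Z Z)$ of $\TZX$ by Remark~\ref{r:parallel-in-shadow}. I plan to partition the sector at the boundary rays of the shadow sub-wedges into maximal subcones each containing at most one shadow ray, apply Theorem~\ref{t:isometry-limit-log} to each to get an isometric image in~$\TZX$ with~$R^*$ on a boundary, and glue these isometric images along~$R^*$. Since $\LL_Z$ preserves angles to~$Z$ by Corollary~\ref{c:LL-preserves-angle-to-Z}, the isometric pieces attach consistently across~$R^*$, and the glued image is a flat sector in~$\TZX$ of angular width $\angle(\LL_Z V, \LL_Z W)$ spanned by $\LL_Z V$ and $\LL_Z W$ with~$R^*$ as an interior ray; this glued sector equals $\hull\{\LL_Z V, \LL_Z W\} \subseteq \hull\LL_Z(\cS)$, forcing $[V, W]_\XX \subseteq \KK$ and completing geodesic convexity.
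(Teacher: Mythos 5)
Your overall strategy---set $\KK = \LL_Z^{-1}\bigl(\hull\LL_Z(\cS)\bigr)$, show it is a geodesically convex cone containing~$\cS$, and invoke minimality of the hull---is a genuinely different route from the paper's proof, which instead iterates the operation of adjoining shortest paths between pairs of points and splits into cases according to whether $\hull\LL_Z(\cS)$ contains a positive-length vector on the collapsed shadow ray $R^* = \RR_{\geq 0}(-\LL_Z Z)$. The cone and containment properties of~$\KK$ are fine, and so is your easy case: when $\hull\{V,W\}$ meets at most one shadow ray, Proposition~\ref{p:almost_iso-of_limit-log:v2} gives $\LL_Z(VW) = \LL_Z V\,\LL_Z W$, which lies in $\hull\LL_Z(\cS)$ by convexity, and that is all that geodesic convexity of~$\KK$ requires (you do not need the stronger claim that $\LL_Z$ commutes with hulls there).

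The gap is in the shadow-crossing case, at the assertion that the glued image is a flat sector of width $\angle(\LL_Z V, \LL_Z W)$ equal to $\hull\{\LL_Z V,\LL_Z W\}$ with~$R^*$ interior. That assertion is equivalent to the additivity
$$
  \angle(\LL_Z V, \LL_Z W)
  =
  \angle(\LL_Z V, -\LL_Z Z) + \angle(-\LL_Z Z, \LL_Z W),
$$
that is, to $-\LL_Z Z$ lying on a shortest path from $\LL_Z V$ to $\LL_Z W$ in~$\vS_Z\XX$. Corollary~\ref{c:LL-preserves-angle-to-Z} together with Proposition~\ref{p:sum=pi} pins down the two summands on the right, but combined with the triangle inequality in~$\vS_Z\XX$ and the contraction property (Proposition~\ref{p:limit-log-contracts}) this only yields the inequality $\angle(\LL_Z V,\LL_Z W) \le \angle(\LL_Z V, -\LL_Z Z)+\angle(-\LL_Z Z, \LL_Z W)$, never the reverse. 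The reverse would require that no shorter path joins $\LL_Z V$ to $\LL_Z W$ while avoiding $-\LL_Z Z$; since $\LL_Z$ need not be surjective, $\vS_Z\XX$ can contain directions with no preimage through which such a shortcut could pass, and nothing you cite rules this out. If additivity fails, then $\hull\{\LL_Z V,\LL_Z W\}$ is a sector missing~$R^*$ entirely, whereas your glued image necessarily contains points of~$R^*$ (the images of the shadow rays inside $\hull\{V,W\}$), so $[V,W]\not\subseteq\KK$ and geodesic convexity of~$\KK$ breaks down; note that for $\cS=\{V,W\}$ the corollary itself reduces to exactly this unproven additivity. The paper's case split is designed precisely to avoid proving it: when $\hull\LL_Z(\cS)$ contains one positive-length vector of~$R^*$ it contains all of~$R^*$ (being a cone), so every image segment produced by Theorem~\ref{t:isometry-limit-log} has both endpoints in $\hull\LL_Z(\cS)$ and convexity finishes the argument; when it contains none, the paper argues that $\hull\cS$ avoids the shadow altogether, so the crossing case never arises. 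To repair your proof you would need either to prove the displayed additivity or to import that dichotomy.
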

\begin{proof}
If $\hull(\LL_Z\cS)$ contains no positive-length vector
in~$\LL_Z\bigl(\IZ\bigr)$, then $\hull\cS$ itself contains no
positive-length vector in the shadow~$\IZ$, because the lift of any
shortest path not meeting~$\IZ$ is a shortest path.  Indeed, by
Theorem~\ref{t:isometry-limit-log} the preimage under~$\LL_Z$ of any
shortest path not meeting~$\IZ$ is a candidate geodesic between the
preimage endpoints whose length equals the distance between the
preimage endpoints, because limit log is a contraction by
Proposition~\ref{p:limit-log-contracts}.

This reduces the question to the case where $\hull(\LL_Z\cS)$ contains
a positive-length vector in~$\LL_Z\bigl(\IZ\bigr)$.  Let $\cS'$ be the
union of all shortest paths in~$\XX$ between pairs of points of~$\cS$.
Iterating if necessary, it suffices to prove $\LL_Z(\cS') \subseteq
\hull\LL_Z(\cS)$.  Suppose that $pq$ is a shortest path in~$\XX$
between $p \in \cS$ and~$q \in \cS$.  Then $pq$ breaks into a union of
closed shortest paths each either contained in~$\IZ$ or having at most
one endpoint in~$\IZ$.  By Theorem~\ref{t:isometry-limit-log},
applying~$\LL_Z$ to each of these geodesic segments yields either a
segment in~$\LL_Z\bigl(\IZ\bigr)$, which is contained
in~$\hull\LL_Z(\cS)$ by hypothesis, or a shortest path from a point
of~$\LL_Z(\cS)$ to~$\LL_Z\bigl(\IZ\bigr)$, which is also contained
in~$\hull\LL_Z(\cS)$.
%
%
%
\end{proof}



\begin{thebibliography}{HMMN15}
\raggedbottom

\bibitem[BBI01]{BBI01}
Dmitri Burago, Yuri Burago, and Sergei Ivanov,
\newblock \emph{A course in metric geometry}, volume~33,
\newblock American Mathematical Soc., 2001.

\bibitem[BHV01]{BHV01}
Louis~J Billera, Susan~P Holmes, and Karen Vogtmann,
\newblock \emph{Geometry of the space of phylogenetic trees},
\newblock Advances in Applied Mathematics~\textbf{27} (2001), no.\,4, 733--767.

\bibitem[BL18]{barden-le2018}
Dennis Barden and Huiling Le,
\newblock \emph{The logarithm map, its limits and Fr\'echet means in orthant spaces},
\newblock Proceedings of the Londong Mathematical Society (3)
  \textbf{117} (2018), no.\,4, 751--789.

\bibitem[BLO13]{barden-le-owen2013}
Dennis Barden, Huiling Le, and Megan Owen,
\newblock \emph{Central limit theorems for Fr\'echet means in the
  space of phylogenetic trees},
\newblock Electronic J.\ of Probability~\textbf{18} (2013), no.\,25, 25\,pp.

\bibitem[BLO18]{barden-le-owen2018}
Dennis Barden, Huiling Le, and Megan Owen,
\newblock \emph{Limiting behaviour of Fréchet means in the space of phylogenetic trees},
\newblock Annals of the Institute of Statistical Mathematics~\textbf{70}
  (2013), no.\,1, 99--129.

\bibitem[BP03]{bhattacharya-patrangenaru2003}
Rabi Bhattacharya and Vic Patrangenaru,
\newblock \emph{Large sample theory of intrinsic and extrinsic sample
  means on manifolds: I},
\newblock Annals of Statistics~\textbf{31} (2003), no.\,1, 1--29.

\bibitem[BP05]{bhattacharya-patrangenaru2005}
Rabi Bhattacharya and Vic Patrangenaru,
\newblock \emph{Large sample theory of intrinsic and extrinsic sample
  means on manifolds: II},
\newblock Annals of Statistics~\textbf{33} (2005), no.\,3, 1225--1259.

\bibitem[BP23]{buet-pennec2023}
Authors: Blanche Buet and Xavier Pennec,
\newblock \emph{Flagfolds},
\newblock preprint.  \textsf{arXiv:math.CA/2305.10583}

\bibitem[FL$^+$13]{feragen-lo-et-al2013}
Aasa Feragen, Pechin Lo, Marleen de Bruijne, Mads Nielsen, and Fran\,cois Lauze,
\newblock \emph{Toward a theory of statistical tree-shape analysis},
\newblock IEEE Trans. Pattern Anal. Mach. Intell.,  \textbf{35}
  (2013), no.\,8, 2008--2021.

\bibitem[GJS17]{groisser-jung-schwartzman2017}
David Groisser, Sungkyu Jung, and Armin Schwartzman,
\newblock \emph{Geometric foundations for scaling-rotation statistics on symmetric
  positive definite matrices: minimal smooth scaling-rotation curves in low dimensions},
\newblock Electron. J. Stat. \textbf{11} (2017), no.\,1, 1092--1159.
%

\bibitem[HHL$^+$13]{hotz-et-al.2013}
Thomas Hotz, Stephan Huckemann, Huiling Le, J.S.\,Marron,
  Jonathan\,C.\,Mattingly, Ezra Miller, James Nolen, Megan Owen, Vic
  Patrangenaru, and Sean Skwerer,
\newblock \emph{Sticky central limit theorems on open books},
\newblock Annals of Applied Probability~\textbf{23} (2013), no.\,6, 2238--2258.

\bibitem[HMMN15]{kale-2015}
Stephan Huckemann, Jonathan Mattingly, Ezra Miller, and James Nolen,
\newblock \emph{Sticky central limit theorems at isolated hyperbolic
  planar singularities},
\newblock Electronic Journal of Probability~\textbf{20} (2015), 1--34.

\bibitem[Hol03]{holmes2003}
Susan Holmes,
\newblock \emph{Statistics for phylogenetic trees},
\newblock Theor.~Popul.~Bio.~\textbf{63} (2003), no.\,1, 17--32.

\bibitem[HTDL13]{hartley-trumpf-dai-li2013}
Richard Hartley, Jochen Trumpf, Yuchao Dai, and Hongdong Li,
\newblock \emph{Rotation averaging},
\newblock International journal of computer vision~\textbf{103}
  (2013), no.\,3, 267--305.

\bibitem[KBCL99]{kendall-barden-carne-le99}
David G.\,Kendall, Dennis Barden, Thomas K.\,Carne, and Huiling Le,
\newblock \emph{Shape and Shape Theory},
\newblock Wiley Series in Probability and Statistics,
  Wiley \& Sons, Ltd., Chichester, 1999.

\bibitem[Lan02]{lang2002}
Serge Lang,
\newblock \emph{Algebra.} Revised third edition,
\newblock Graduate Texts in Mathematics, vol.\,211, Springer, 2002.

\bibitem[Le01]{le2001}
Huiling Le,
\newblock \emph{Locating Fr\'echet means with application to shape spaces},
\newblock Advances in Applied Probability~\textbf{33} (2001), no.\,2, 324--338.

\bibitem[LGNH21]{lueg-garba-nye-huckemann2021}
Jonas Lueg, Maryam Garba, Tom Nye, and Stephan Huckemann,
\newblock \emph{Wald space for phylogenetic trees},
\newblock in Geometric Sci.\ of Inform., Lect.\ Notes in
  Comp.\ Sci.\ \textbf{12829} (2021),~710--717.

\bibitem[LSTY17]{lin-sturmfels-tang-yoshida2017}
Bo Lin, Bernd Sturmfels, Xiaoxian Tang, and Ruriko Yoshida,
\newblock \emph{Convexity in tree spaces},
\newblock SIAM J. Discrete Math. \textbf{31} (2017), no.\,3, 2015--2038.

\bibitem[MBH15]{MBH15}
Mina Movahedi, Daryoush Behmardi, and Seyedehsomayeh Hosseini,
\newblock \emph{On the density theorem for the subdifferential of
  convex functions on Hadamard spaces},
\newblock Pacific Journal of Mathematics~\textbf{276} (2015), no.\,2, 437--447.


\bibitem[MMT23b]{tangential-collapse}
Jonathan Mattingly, Ezra Miller, and Do Tran,
\newblock \emph{Geometry of measures on smoothly stratified metric spaces},
\newblock preprint, 2023.

\bibitem[MMT23c]{random-tangent-fields}
Jonathan Mattingly, Ezra Miller, and Do Tran,
\newblock \emph{A central limit theorem for random tangent fields on stratified spaces},
\newblock preprint, 2023.

\bibitem[MMT23d]{escape-vectors}
Jonathan Mattingly, Ezra Miller, and Do Tran,
\newblock \emph{Central limit theorems for Fr\'echet means on stratified spaces},
\newblock preprint, 2023.

\bibitem[MOP15]{centroids}
Ezra Miller, Megan Owen, and Scott Provan,
\newblock \emph{Polyhedral computational geometry for averaging metric
  phylogenetic trees},
\newblock Advances in Applied Math.\ \textbf{15} (2015), 51--91.
\newblock doi: \href{http://dx.doi.org/10.1016/j.aam.2015.04.002}%
                    {10.1016/j.aam.2015.04.002}

\bibitem[PSD00]{pritchard-stephens-donnelly2000}
Jonathan~K Pritchard, Matthew Stephens, and Peter Donnelly,
\newblock \emph{Inference of population structure using multilocus genotype data},
\newblock Genetics~\textbf{155} (2000), no.\,2, 945--959.

\bibitem[PSF20]{pennec-sommer-fletcher2020}
Xavier Pennec, Stefan Sommer, and Tom Fletcher (eds.),
\newblock \emph{Riemannian geometric statistics in medical image analysis},
\newblock Acad.~Press, 2020.
\newblock doi: \href{https://doi.org/10.1016/B978-0-12-814725-2.00012-1}%
                    {10.1016/B978-0-12-814725-2.00012-1}

\bibitem[Stu03]{sturm2003}
Karl-Theodor Sturm,
\newblock \emph{Probability measures on metric spaces of nonpositive curvature},
\newblock in Heat kernels and analysis on manifolds, graphs, and metric
  spaces: lecture notes from a quarter program on heat kernels, random
  walks, and analysis on manifolds and graphs, Contemporary
  Mathematics \textbf{338} (2003), 357--390.


\end{thebibliography}


\end{document}